\definecolor{myblue}{rgb}{.8, .8, 1}
\definecolor{RedOrange} {cmyk}{0,0.77,0.87,0}
\def\smartqedtr{\def\qedtr{\ifmmode\triangle\else{\unskip\nobreak\hfil
\penalty50\hskip1em\null\nobreak\hfil$\triangle$
\parfillskip=0pt\finalhyphendemerits=0\endgraf}\fi}}
\def\gph{\mathop{\rm gph\,}}
\def\inte{\mathop{\rm int }}
\def\dom{\mathop{\rm Dom\,}}
\def\reg {\mathop{\rm reg\,}}
\def\cone{\mathop{\rm cone\,}}
\def\N{{\mathbb{N}}}
\def\R{{\mathbb{R}}}
\begin{document}

\title{Directional H\"older Metric Regularity
\thanks{Research partially supported by
Ministerio de Economıa y Competitividad under grant
MTM2011-29064-C03(03),  by LIA  `` FormathVietnam " and by NAFOSTED.}}
\author{Huynh Van Ngai \and Nguyen Huu Tron \and Michel Th\'era}
\institute{Van Ngai Huynh  \at
Department of Mathematics, University of
Quy Nhon, 170 An Duong Vuong, Quy Nhon, Vietnam \\
\email{ngaivn@yahoo.com}
\and
Huu Tron Nguyen
 \at
Department of Mathematics, University of
Quy Nhon, 170 An Duong Vuong, Quy Nhon, Vietnam \\
\email{nguyenhuutron@qnu.edu.vn}
\and
Michel Th\'era  (\Letter\,) \at
Laboratoire XLIM, Universit\'e de Limoges and Federation University Australia \\
\email{michel.thera@unilim.fr, m.thera@federation.edu.au}
}


\date{Received: date / Accepted: date}

\maketitle

\begin{abstract}
This paper sheds new light on  regularity of multifunctions  through various characterizations of directional H\"{o}lder/Lipschitz metric regularity,   which are based on the concepts of  slope and coderivative. By using these   characterizations, we show that  directional H\"{o}lder/Lipschitz metric regularity is stable,  when the multifunction under consideration is perturbed suitably. Applications of  directional H\"{o}lder/Lipschitz metric regularity to investigate the stability and the sensitivity analysis  of parameterized optimization problems are  also discussed.
\end{abstract}

\keywords{
Slope \and
Metric regularity \and  H\"{o}lder metric regularity\and  Generalized equation\and Fr\'echet subdifferential \and  Asplund spaces \and  Ekeland variational principle, Hadamard  directional differentiability.}

\subclass{
49J52; 49J53; 58C06; 47H04; 54C60; 90C30}

\section{Introduction}

Throughout the last  three decades, metric regularity has been significantly developed and has become  one of  the central concepts of modern variational analysis.The terminology ``metric regularity'' was coined by  Borwein \cite {BorZhuang88},  but  the roots of this notion can be traced back to the classical \emph{open mapping theorem} and its subsequent generalization to nonlinear mappings known as the  \emph{Lyusternik-Graves theorem}. The theory of metric regularity is extraordinary useful for  investigating  the behavior of  solutions of a nonlinear equation under small perturbations of the data, or more generally the behavior of the solution set of generalized
equations associated with  a set-valued mapping.  As a result, metric regularity plays an important role in many aspects of optimization, differential inclusions, control theory, numerical methods and in many problems of analysis.
 According to the long history of metric regularity there is an abundant  literature on conditions ensuring this property. We refer the reader  to  the  basic  monographs  \cite {Kum-Kl, DR, B.book1, B.book2, JPP}, to  the excellent survey of A. Ioffe \cite{ALEX} (in preparation)
  and to some (non exhaustives) references
 \citep  {BorZhuang88, AzeSMAI, Aze06, RefBonS, BD,BorZhu96,  RefCom,  RefDT1, DmiKru09.1, F-90, FQ-12, Io00, RefIo2,  RefJT, JT1, Lyusternik,  RefMorS, RefNT3, 
Pen89}.
\vskip 0.1cm
 Apart from the study of the  usual (Lipschitz) metric regularity,  H\"{o}lder metric regularity or more generally nonlinear metric regularity have been  studied over  the  years $1980-1990$s  by several authors,   including  for example Borwein and Zhuang \cite{BorZhuang88}, Frankowska \cite{F-90}, Penot \cite{Pen89}, and recently, for  instance,  Frankowska and Quincampoix \cite{FQ-12}, Ioffe \cite{I-Hreg}, Li and  Mordukhovich \cite{LiMo}, Oyang and Mordukovhich \cite{OM}.
\vskip 0.2cm
Recently, several directional versions of metric regularity  notions were considered. In \citep{ AruAvaIzm07, AI}, Arutyunov et al have introduced and studied a notion of \textit{directional metric regularity}. This notion is an extension of an earlier notion used by Bonnans and  Shapiro \cite{RefBonS} to study sensitivity analysis. Later, Ioffe \cite{I-reg-concept} has introduced and investigated an extension called {\it relative metric regularity} which covers many notions of metric regularity in the literature. In particular,  another version of directional metric regularity/subregularity has been introduced and extensively studied by Gfrerer in \citep{Gfrerer-SVA,Gfr2} where some variational characterizations of this 
concept have been established and
successfully applied
 to study optimality conditions for mathematical programs. In fact, this directional regularity property has been earlier used by Penot \cite {Penot-SIOPT} to study second order optimality conditions. In the line of the directional version of  metric subregularity considered by Gfrerer  \cite{Gfrerer-SVA},    Huynh, Nguyen and Tinh \cite{NTrT} have  studied directional H\"{o}lder metric subregularity in order  to investigate  tangent cones to zero sets in degenerate cases.
\vskip 0.1cm
It is our aim in  the  present article  to study  a  \textit{directional version} of H\"{o}lder metric regularity.
The structure of the article is as follows. In the Section 2, we establish slope-based characterizations of directional  H\"{o}lder/Lipschitz metric regularity. In Section 3, we present a stability property for
  directional  Lipschitz metric regularity. In Section 4, a sufficient condition for  directional  H\"{o}lder/Lipschitz metric regularity based on the Fr\'{e}chet coderivative  is established in Asplund spaces. This condition becomes necessary when,  either the multifunction under consideration is convex,  or  when considering  directional Lipschitz metric regularity. It was silmultaneously showed that under this condition,  directional  H\"{o}lder/Lipschitz metric regularity persists  when the multifunction is perturbed  by a  Hadamard differentiable mapping.
Applications to  the study of  the stability and the sensitivity analysis of parameterized optimization problems are discussed in  Section 4. The last section contains concluding remarks.
\section{Notations and Preliminaries}
Throughout we let $X$ and $Y$ denote  metric spaces endowed with  metrics both denoted by $d(\cdot,\cdot).$ We denote
the  open and  closed balls with center $x$  and radius $r>0$  by $B(x,r)$  and $\bar{B}(x,r),$ respectively. For  a given set $C$, we write  $\inte C$ for its topological interior.
A set-valued mapping (also called multifunction) $F: X\rightrightarrows Y$ is a mapping assigning,  to each point $x\in X$, a subset (possibly empty) $F(x)$ of $Y$. We use the notations
 $$\gph F:= \{(x,y)\in X\times Y\ :\ y\in F(x)\} \quad  \text{and }\quad
\dom F:=\{x\in X\ :\ F(x)\neq \emptyset \} $$
 for  the \textit{graph} of  and the  \textit{domain}  of $F$,  respectively. For  each set-valued mapping  $F: X\rightrightarrows Y$,  we define the \textit{inverse } of $F$,  as  the mapping $F^{-1}:Y\rightrightarrows X$  defined by
  $F^{-1}(y):=\{x\in X\ :\ y\in F(x)\}, \,\, y\in Y\}$ and satisfying
 $$(x,y)\in \gph F\;\iff\; (y,x)\in \gph F^{-1}.$$
 We use the standard notation $d(x, C)$ to denote  the distance  from $x$ to a
set $C$ ; it is   defined by
 $d(x, C) = \inf_{z \in C} d(x,z)$,  with the convention that   $d(x, S) = +\infty$ whenever $S$ is  the empty set.
As pointed out in the introduction,  main attention in  this contribution  is paid to the study of the concept of metric regularity. Recall that a mapping $F$ is said to be {\it metrically regular} at  $(\bar x,\bar y)\in \gph F$   with modulus  $\tau>0$,  if there
exists a neighborhood $U\times V$ of $(\bar x,\bar y)$  such that
\begin{equation}\label{Regular}
d(x,F^{-1}( y))\leq \tau d(y,F(x))\quad\mbox{for all} \quad (x,y)\in
U\times V.
\end{equation}
In other words,  metric regularity allows to estimate the  dependence of the distance
of a trial point $x\in X$  from the solution set $ F^{-1}(y) $ in terms of the residual quantity $d(y, F(x))$  for all pairs
$(x, y)$  around the reference pair $(\bar x, \bar y) \in \gph F$.
 The infinum of all moduli $\tau$  is denoted by $\reg F(\bar x,\bar y).$\\
 If in the above   definition  we fix  $y=\bar y$ in (\ref{Regular}),  then we obtain  a weaker notion  called  \textit{metric subregularity},  see e.g. \citep{ B.book1, JPP, Roc-Wet}.
Observe  that this latter property is equivalent to the existence of some neighborhood $U$ of $\bar x$ such that
 $$d(x, F^{-1}(\bar y)) \leq  \tau d(\bar y, F(x)) \quad \text{ for all} \quad x\in U.$$ It is also well known that metric subregularity
 can be treated in the framework of the theory of error bounds of
extended-real-valued functions, see e.g. \citep{ Kr15.2,Kr15}.
\vskip 2mm
An H\"older version of metric regularity is defined as follows (Frankowska and Quincampoix \cite{FQ-12}, Ioffe \cite{I-Hreg}).
Let  $q\in(0,1]$  be given. A mapping $F$ is said to be {\it
metrically $q$-regular} or H\"older metrically regular of
order $q$ at  $(\bar x,\bar y)\in \gph F $   with modulus  $\tau>0$ if there exists a neighborhood
$U\times V$ of $(\bar x, \bar y)$  such that
\begin{equation}\label{Holderregular}
d(x,F^{-1}(y))\leq \tau [d(y,F(x))]^{q}\quad\mbox{for all} \quad (x,y)\in U\times V.
\end{equation}
 The infimum of all moduli $\tau$ satisfying (\ref{Holderregular})  is denoted by $\reg^{q} F(\bar x,\bar y),$ i.e.,
$$\mbox{reg}_{q} F(\bar x,\bar y)=\inf\{\tau>0: \exists\delta>0 \;s.t.\; d(x,F^{-1}(y))\leq \tau (d( y,F(x)))^{q}\quad\mbox{for all} \quad (x,y)\in B(\bar x,\delta)\times B(\bar y,\delta)\}.$$
Fixing $y=\bar y$ in the above definition, gives  the concept of {\it q-H\"older metric subregularity} of the set-valued mapping $F$ at $(\bar x,\bar y)$.
\vskip 2mm
In the  present paper, we are interested in a \textit{directional version} of H\"{o}lder metric regularity, defined as follows.
\begin{definition}\label{Def-Direc}
Let $X, Y$ be normed linear spaces. Let   a real
$\gamma\in]0,1]$ and   $(u,v)\in X\times Y$ be given. A multifunction  $F$ is
said to be (directionally) metrically $\gamma$-regular at   $(\bar x,\bar
y)\in \gph F$ in  the direction $(u,v)$ with a modulus  $\tau>0$ iff there
exists  $\delta,\varepsilon,\eta>0$ such that
\begin{equation}\label{DHreg}
d(x,F^{-1}(y))\leq \tau [d(y,F(x))]^{\gamma}
\end{equation}
for all $(x,y)\in B((\bar x,\bar y),\delta)$ with $(x,y)\in (\bar x,\bar y)+\cone{B}((u,v),\varepsilon);$ $d(y,F(x))\le \eta\Vert (x,y)-(\bar x,\bar y)\Vert^{1/\gamma}.$  Here $\cone B((u,v),\varepsilon)$ stands for the conic hull of ${B}((u,v),\varepsilon)$, i.e., $\cone {B}((u,v),\varepsilon)=\cup_{\lambda
\geq 0} \lambda{B}((u,v),\varepsilon).$
\end{definition}
\begin{remark}
In Definition \ref{Def-Direc},  the triple $\delta, \varepsilon, \eta>0$ may be replaced by  just a single positive number.
\end{remark}
If (\ref{DHreg}) is required to be verified only at $y=\bar y,$ and $x\in B(\bar x,\delta)$ with $x\in \bar x+\cone{B}(u,\varepsilon),$ we say that $F$ is {\it directionally H\"{o}lder metrically subregular}   at $(\bar x,\bar y)$ in the direction $u.$ When $\gamma=1,$ one refers to the {\it (Lipschitz) directional metric regularity}, as equivalently introduced by Gfrerer  \cite{Gfrerer-SVA}.
\vskip 0.2cm
Since in the definition above, the gauge condition $d(y,F(x))\le \eta\Vert (x,y)-(\bar x,\bar y)\Vert^{1/\gamma}$ is added, when $(u,v)=(0,0),$ the version of  H\"{o}lder/Lipschitz metric regularity in Definition \ref{Def-Direc} is even weaker than  the usual ones defined by (\ref{Holderregular}). For example, obviously, the function $f(x)=x^2,$ $x\in\R$ is H\"{o}lder metrically regular of order $1/2$ at $(0,0)$ in the  direction $(0,0)$ in the sense of Definition \ref{Def-Direc},
but is not in the usual sense of (\ref{Holderregular}). The added gauge conditions in concepts of metric regularity are really needed when the usual regularity is not satisfied (see, e.g., Ioffe \cite{I-Hreg}). Let us mention that directional  H\"{o}lder/Lipschitz metric regularity is obviously stronger than H\"{o}lder/Lipschitz metric subregularity.
The main purpose of the present paper is to show that the tools of variational analysis such as  the  f and the concept of  coderivative can be used to efficiently characterize  directional H\"{o}lder/Lipschitz metric regularity. Our aim is to show  that this directional version of H\"{o}lder/Lipschitz metric regularity, although  weaker than  the usual metric regularity, possesses the suitable stability properties which are   lost in the case of metric subregularity.

\section{Slope Characterizations of Directional H\"older Metric Regularity }
 Let  $X$  be a metric space. Let  $f : X \rightarrow \mathbb{R} \cup
\{+\infty\}$  be a given extended-real-valued function.  As usual, $\mbox{dom}\,f := \{x
\in X : f(x) < +\infty\}$ denotes the domain of $f$.
\vskip 0.2cm Recall from  \cite  {RefDMT}, (see also  \cite  {RefAC2}  and the discussion in \cite  {Kr15})  that the \textit{local
 slope}    and the   \textit{nonlocal slope}  (see, e.g.,  \cite  {FabHenKruOut12}) of the
function $f$ at $x\in\mbox{dom}f$, are the quantities  denoted  respectively by $\vert\nabla f\vert (x)$ and $\vert\Gamma f\vert(x)$.
Using the notation $[a]_+$  for  $\max\{a,0\}$, they are defined by
$\vert\nabla f\vert (x)=\vert\Gamma f\vert(x)=0$ if $x$ is a local minimum of $f$ and otherwise by
\begin{equation}\label{local slope}\vert\nabla f\vert (x)=\limsup_{y\to x,\;y\ne x}\frac{f(x)-f(y)}{d(x,y)}\end{equation}
and
\begin{equation}\label{non local slope}\vert\Gamma f\vert(x)=\sup_{y\ne x}\frac{[f(x)-f(y)]_+}{d(x,y)}.\end{equation}
For $x\notin \mbox{dom}f,$  we set  $\vert\nabla f\vert
(x)=\vert\Gamma f\vert(x)=+\infty.$ Obviously, $\vert\nabla f\vert (x)\le\vert\Gamma f\vert(x)$ for all $x\in X$.
\vskip 0.5cm
Let $X, Y$ be normed spaces. If not
specified otherwise, we assume that  the norm on $X\times Y$ is defined by
$$ \Vert (x,y)\Vert=\Vert x\Vert +\Vert y\Vert,\; (x,y)\in X\times Y.$$ For a  \textit{closed multifunction}
$F:X\rightrightarrows Y$, (i.e., when  the graph of $F$ is closed in $X\times Y$),
the \textit{lower semicontinuous envelope} of the distance function $(x,y)\rightarrow d(y,F(x))$ is defined,  for a  given $(x,y)\in X\times Y$,   by
$$\varphi(x,y):=\liminf_{(u,v)\to (x,y)}d(v,F(u))=\liminf_{u\to x}d(y,F(u)).$$
In what follows,  for $u\in X$,  we shall use the notation   $x\underset {u} {\rightarrow}\bar x$ to mean
$$\begin{array}{lll}& x\to \bar x&\text{if} \;u=0,
 \\
&\left\|\frac{x-\bar x}{\Vert x-\bar x \Vert}-\frac{u}{\Vert u\Vert}\right\|\to 0&
\\&x\to \bar x& \; \text{if}\; u\ne 0.
\end{array}$$
Obviously, given a sequence $\{ x_n\}\subseteq X, u\in X$   the two facts are equivalent:
\begin{enumerate}
\item [\textbf{(C}$_1$)]:
$\{x_n\}\underset{u}\rightarrow \bar x$;
\item [\textbf{(C}$_2$)]:
 $\{x_n\}\to \bar x$ and  there is a sequence of nonnegative reals  $\{\delta_n\}\to 0$ such that
$$ x_n\in \bar x + \cone B(u,\delta_n), \forall n\in \N.$$
\end{enumerate}
We need  the following series of useful  lemmas whose proofs are straightforward.
\begin{lemma}\label{Tron}
Let $X$ be a Banach space and $Y$ be a normed space. Suppose a  closed multifunction
$F:X\rightrightarrows Y$  and    a point $(\bar x,\bar y)\in \gph F $ are  given.   Given $(u,v)\in X\times Y$, and $\gamma\in ]0,1]$, then $F$ is metrically $\gamma-$regular at $(\bar x,\bar y)$ in  the direction $(u,v)$ with modulus $\tau>0$,  if and only if,
there exist real numbers  $\tau, \delta>0$ such that
$$d(x,F^{-1}(y))\le \tau\varphi^\gamma(x,y)$$
$\mbox{for all}\; \;(x,y)\in B((\bar x,\bar y),\delta)\cap ((\bar x,\bar y)+\cone B((u,v),\delta)) \;\text{with}\; \; d(y,F(x))\le \delta\Vert (x,y)-(\bar
x,\bar y)\Vert^{\frac{1}{\gamma}}.$
\end{lemma}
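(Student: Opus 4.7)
The plan is to prove the two implications separately, using the key inequality $\varphi(x,y)\le d(y,F(x))$ (which is immediate from the definition of $\varphi$ as a $\liminf$) for one direction, and the closedness of $\gph F$ together with a sequential approximation argument for the other.

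For the ``if'' direction, I would start from a pair $(x,y)$ lying in the directional/gauge region of Definition~\ref{Def-Direc} (with constants $\delta,\varepsilon,\eta$) and note that after possibly shrinking these constants, the same pair fits the hypotheses of the $\varphi$-version. Since $\varphi(x,y)\le d(y,F(x))$, the assumed estimate $d(x,F^{-1}(y))\le\tau\varphi^{\gamma}(x,y)$ immediately yields $d(x,F^{-1}(y))\le\tau d(y,F(x))^{\gamma}$, which is the directional H\"older regularity in the original sense with the same modulus $\tau$.

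For the ``only if'' direction, assume the standard directional regularity with constants $\tau,\delta_{0},\varepsilon_{0},\eta_{0}$ and choose $\delta:=\tfrac12\min\{\delta_{0},\varepsilon_{0},\eta_{0}\}$. Given $(x,y)$ satisfying the hypotheses of the $\varphi$-version with this $\delta$, the case $(x,y)=(\bar x,\bar y)$ is trivial since $\bar x\in F^{-1}(\bar y)$. Otherwise, by definition of $\varphi$, pick a sequence $x_{n}\to x$ with $d(y,F(x_{n}))\to\varphi(x,y)$ and apply the regularity to the pairs $(x_{n},y)$. The main point is to verify, for all large $n$, the three hypotheses required by Definition~\ref{Def-Direc}:
\begin{enumerate}
\item[(a)] $(x_{n},y)\in B((\bar x,\bar y),\delta_{0})$, which follows from $(x_{n},y)\to(x,y)$ and $\|(x,y)-(\bar x,\bar y)\|<\delta$;
\item[(b)] $(x_{n},y)\in(\bar x,\bar y)+\cone B((u,v),\varepsilon_{0})$: writing $(x,y)-(\bar x,\bar y)=\lambda w$ with $\lambda>0$ and $w\in B((u,v),\delta)$, one has $(x_{n},y)-(\bar x,\bar y)=\lambda(w+(x_{n}-x,0)/\lambda)$, and the perturbation has norm $\|x_{n}-x\|/\lambda\to 0$, so the point lies in $\cone B((u,v),\varepsilon_{0})$ eventually;
\item[(c)] $d(y,F(x_{n}))\le\eta_{0}\|(x_{n},y)-(\bar x,\bar y)\|^{1/\gamma}$: since $\|(x_{n},y)-(\bar x,\bar y)\|\to\|(x,y)-(\bar x,\bar y)\|>0$ and $d(y,F(x_{n}))\to\varphi(x,y)\le d(y,F(x))\le\delta\|(x,y)-(\bar x,\bar y)\|^{1/\gamma}$, the ratio converges to something $\le\delta<\eta_{0}/2$.
\end{enumerate}

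Once (a)--(c) hold, the standard directional regularity gives $d(x_{n},F^{-1}(y))\le\tau d(y,F(x_{n}))^{\gamma}$; combining with the triangle inequality,
\[
d(x,F^{-1}(y))\le\|x-x_{n}\|+\tau d(y,F(x_{n}))^{\gamma},
\]
and letting $n\to\infty$ produces the desired bound $d(x,F^{-1}(y))\le\tau\varphi(x,y)^{\gamma}$, with the same modulus $\tau$. The main (minor) obstacle is the bookkeeping in (b) to pass the directional-cone membership from $(x,y)$ to the perturbed points $(x_{n},y)$; the observation that $\lambda$ is a strictly positive scalar depending only on $(x,y)$ (but not on $n$) makes the perturbation argument go through without difficulty.
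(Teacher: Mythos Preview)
The paper does not supply a proof of Lemma~\ref{Tron}; it merely notes before stating it that the proofs of the preparatory lemmas are ``straightforward.'' Your argument is correct and furnishes exactly these straightforward details: the ``if'' direction follows at once from $\varphi(x,y)\le d(y,F(x))$, and the ``only if'' direction is obtained by the natural sequential approximation $x_n\to x$ with $d(y,F(x_n))\to\varphi(x,y)$, together with the routine verifications (a)--(c) that the perturbed points $(x_n,y)$ remain in the directional/gauge region. This is precisely the kind of elementary argument the authors had in mind.
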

\vskip 0.5cm
\begin{lemma}\label{Direc-conver} Let $u\in X$ and $\bar x\in X$  be given as well as a sequence $\{x_n\}$ such that $x_n\underset{u}\rightarrow \bar x.$ Then, for any sequence  $\{\delta_n\} \downarrow  0$ of  nonnegative reals and any sequence $\{z_n\}\subseteq X$ with
\begin{equation}\label{genevieve}
 \|z_n-x_n\| \le \delta_n\|x_n-\bar x\|,\; n\in\N,\end{equation}
one has $z_n\underset{u}\rightarrow\bar x.$
\end{lemma}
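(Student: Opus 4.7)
The plan is to unwind the two definitions of $\underset{u}{\to}$ and treat the cases $u=0$ and $u\ne 0$ separately, since only the second case requires an argument about directions.

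When $u=0$, the assertion reduces to $z_n\to\bar x$ in norm. This follows immediately from the triangle inequality
$$\|z_n-\bar x\|\le \|z_n-x_n\|+\|x_n-\bar x\|\le (1+\delta_n)\|x_n-\bar x\|,$$
together with $x_n\to\bar x$ and $\delta_n\to 0$.

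When $u\ne 0$, we need two things: $z_n\to\bar x$ and $\left\|\frac{z_n-\bar x}{\|z_n-\bar x\|}-\frac{u}{\|u\|}\right\|\to 0$. The first is obtained exactly as above. For the second, I first need to know that $z_n\ne\bar x$ eventually so that the normalisation is defined. Since $x_n\underset{u}{\to}\bar x$ with $u\ne 0$ forces $x_n\ne\bar x$ for large $n$, the reverse triangle inequality together with \eqref{genevieve} gives
$$\|z_n-\bar x\|\ge \|x_n-\bar x\|-\|z_n-x_n\|\ge (1-\delta_n)\|x_n-\bar x\|>0$$
for $n$ large enough. Next, I write
$$\frac{z_n-\bar x}{\|z_n-\bar x\|}-\frac{u}{\|u\|}=\left(\frac{z_n-\bar x}{\|z_n-\bar x\|}-\frac{x_n-\bar x}{\|x_n-\bar x\|}\right)+\left(\frac{x_n-\bar x}{\|x_n-\bar x\|}-\frac{u}{\|u\|}\right),$$
where the second bracket tends to $0$ by hypothesis. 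For the first bracket I use the elementary bound valid for nonzero vectors $a,b$,
$$\left\|\frac{a}{\|a\|}-\frac{b}{\|b\|}\right\|\le \frac{\|a-b\|}{\|b\|}+\frac{\bigl|\|a\|-\|b\|\bigr|}{\|b\|}\le \frac{2\|a-b\|}{\|b\|},$$
applied to $a=z_n-\bar x$, $b=x_n-\bar x$. Combined with \eqref{genevieve} this yields
$$\left\|\frac{z_n-\bar x}{\|z_n-\bar x\|}-\frac{x_n-\bar x}{\|x_n-\bar x\|}\right\|\le \frac{2\|z_n-x_n\|}{\|x_n-\bar x\|}\le 2\delta_n\longrightarrow 0,$$
and the conclusion $z_n\underset{u}{\to}\bar x$ follows.

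There is essentially no obstacle here; the argument is purely computational. The only point that deserves a moment of care is verifying that the denominators $\|x_n-\bar x\|$ and $\|z_n-\bar x\|$ are positive from some index on, which is what makes the case split $u=0$ versus $u\ne 0$ necessary.
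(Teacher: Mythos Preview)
Your argument is correct and follows essentially the same route as the paper's proof: both use the reverse triangle inequality to bound $\|z_n-\bar x\|$ from below by $(1-\delta_n)\|x_n-\bar x\|$ and then control the normalized difference via triangle-inequality estimates yielding a factor $2\delta_n$. The only cosmetic difference is that you package the key estimate as the inequality $\left\|\tfrac{a}{\|a\|}-\tfrac{b}{\|b\|}\right\|\le \tfrac{2\|a-b\|}{\|b\|}$, whereas the paper normalizes $u$ and bounds $\|z_n-\bar x-u\|z_n-\bar x\|\|$ directly; the content is the same.
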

\vskip 0.2cm
\begin{proof}  It suffices to prove the result when $\Vert u\Vert=1.$ If $x_n=\bar x,$ then from (\ref{genevieve}) we have  $z_n=\bar x$ and we are done.
 Otherwise, one has
$$ (1-\delta_n)\Vert x_n-\bar x\Vert\le \Vert z_n-\bar x\Vert,$$
and since
$$ \begin{array}{ll}
\Vert z_n-\bar x-u\Vert z_n-\bar x\Vert\Vert&\le 2\Vert z_n-x_n\Vert +\Vert x_n-\bar x-u\Vert x_n-\bar x\Vert\Vert \\
&\le 2\delta_n\Vert x_n-\bar x\Vert+\Vert x_n-\bar x-u\Vert x_n-\bar x\Vert\Vert,
\end{array}$$
it follows that
$$\left\Vert\frac{z_n-\bar x}{\Vert z_n-\bar x\Vert}-u\right\Vert=\frac{\Vert z_n-\bar x-u\Vert z_n-\bar x\Vert\Vert}{\Vert z_n-\bar x\Vert}\\
\le(1-\delta_n)^{-1}\left(2\delta_n +\left\Vert\frac{x_n-\bar x}{\Vert x_n-\bar x\Vert}-u\right\Vert\right).$$
As $\left\Vert\frac{x_n-\bar x}{\Vert x_n-\bar x\Vert}-u\right\Vert\to 0$ and $\{\delta_n\} \to 0,$ one obtains
$$\left\Vert\frac{z_n-\bar x}{\Vert z_n-\bar x\Vert}-u\right\Vert\to 0$$ as $n$ tends to infinity. Moreover, it is easy to see that $z_n\to\bar x$. So,  one has that $z_n\underset{u}\rightarrow\bar x$.  \hfill{$\Box$} \end{proof}
\vskip 0.5cm
\begin{theorem}\label{theo1}
Let $X$ be a Banach space and $Y$ be a normed space. Suppose  a closed multifunction
$F:X\rightrightarrows Y$ and   a point $(\bar x,\bar y)\in \gph F $ are given. Given $(u,v)\in X\times Y$, and $\gamma\in ]0,1]$, $F$ is directional  metrically $\gamma$-regular at $(\bar x,\bar y)$ in the direction $(u,v)$,  if and only if,
\begin{equation} \label{EvSNTDK1}
\underset{\underset{\frac{\varphi^\gamma(x,y)}{\Vert (x,y)-(\bar x,\bar y)\Vert}\to 0}{ (x,y)\underset {(u,v)} {\rightarrow}(\bar x,\bar y),\;\varphi(x,y)>0}}{\liminf} \vert \Gamma \varphi^\gamma(\cdot,y)\vert (x)>0.
\end{equation}
\end{theorem}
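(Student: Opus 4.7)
The plan is to route everything through Lemma~\ref{Tron}, so that directional $\gamma$-regularity becomes the inequality $d(x,F^{-1}(y))\le\tau\varphi^\gamma(x,y)$ on a directional-cum-gauge neighborhood, and to use throughout that $\gph F$ is closed, hence $\varphi(x,y)=0\iff x\in F^{-1}(y)$.

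\emph{Necessity.} Assuming directional $\gamma$-regularity with modulus $\tau$ I fix an admissible $(x,y)$ for the liminf in (\ref{EvSNTDK1}). Because the gauge in Lemma~\ref{Tron} is on $d(y,F(\cdot))$, which may exceed $\varphi(x,y)$, I exploit that $\varphi(\cdot,y)$ is the lsc envelope of $d(y,F(\cdot))$ to pick, for each $\epsilon>0$, a point $u\in X$ with $d(u,x)<\epsilon^2$ and $d(y,F(u))<\varphi(x,y)+\epsilon^2$. Lemma~\ref{Direc-conver} together with admissibility $\varphi^\gamma(x,y)=o(\Vert(x,y)-(\bar x,\bar y)\Vert)$ places $(u,y)$ in the directional-gauge neighborhood of Lemma~\ref{Tron}, which then gives
$$
d(u,F^{-1}(y))\le\tau\varphi^\gamma(u,y)\le\tau(\varphi(x,y)+\epsilon^2)^\gamma\le\tau\varphi^\gamma(x,y)+\tau\epsilon^{2\gamma}
$$
using $(a+b)^\gamma\le a^\gamma+b^\gamma$ for $\gamma\in(0,1]$. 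Taking $z\in F^{-1}(y)$ within $\epsilon^2$ of that infimum and using $\varphi^\gamma(z,y)=0$ and $z\ne x$ (since $\varphi(x,y)>0$ forces $x\notin F^{-1}(y)$), the nonlocal slope obeys
$$
\vert\Gamma\varphi^\gamma(\cdot,y)\vert(x)\ge\frac{\varphi^\gamma(x,y)}{d(x,z)}\ge\frac{\varphi^\gamma(x,y)}{\tau\varphi^\gamma(x,y)+O(\epsilon^{2\gamma})}.
$$
Sending $\epsilon\downarrow 0$ (for fixed admissible $(x,y)$) gives $\vert\Gamma\varphi^\gamma(\cdot,y)\vert(x)\ge 1/\tau>0$, uniformly on the admissible set.

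\emph{Sufficiency.} I would argue by contrapositive using Ekeland's variational principle. If directional $\gamma$-regularity fails, Lemma~\ref{Tron} delivers for each $n\in\N$ a point $(x_n,y_n)\in B((\bar x,\bar y),1/n)\cap[(\bar x,\bar y)+\cone B((u,v),1/n)]$ with $d(y_n,F(x_n))\le n^{-1}\Vert(x_n,y_n)-(\bar x,\bar y)\Vert^{1/\gamma}$ and
$$
d(x_n,F^{-1}(y_n))>n\,\varphi^\gamma(x_n,y_n)=:n\epsilon_n,\qquad \epsilon_n>0.
$$
Setting $\lambda_n:=n^{\gamma/2}\epsilon_n$, I apply Ekeland to the lsc nonnegative function $\varphi^\gamma(\cdot,y_n)$ at the $\epsilon_n$-minimizer $x_n$, producing $z_n$ with $d(z_n,x_n)\le\lambda_n$, $\varphi^\gamma(z_n,y_n)\le\epsilon_n$, and $\vert\Gamma\varphi^\gamma(\cdot,y_n)\vert(z_n)\le\epsilon_n/\lambda_n=n^{-\gamma/2}$. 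The gauge converts to $\epsilon_n\le n^{-\gamma}\Vert(x_n,y_n)-(\bar x,\bar y)\Vert$, hence $\lambda_n\le n^{-\gamma/2}\Vert(x_n,y_n)-(\bar x,\bar y)\Vert$; Lemma~\ref{Direc-conver} in $X\times Y$ (only the first coordinate moving) then yields $(z_n,y_n)\underset{(u,v)}{\to}(\bar x,\bar y)$. The inequality $\lambda_n<n\epsilon_n\le d(x_n,F^{-1}(y_n))$ keeps $z_n$ strictly outside $F^{-1}(y_n)$, so $\varphi(z_n,y_n)>0$, while $\Vert(z_n,y_n)-(\bar x,\bar y)\Vert\ge(1-n^{-\gamma/2})\Vert(x_n,y_n)-(\bar x,\bar y)\Vert$ together with $\varphi^\gamma(z_n,y_n)\le\epsilon_n$ gives $\varphi^\gamma(z_n,y_n)/\Vert(z_n,y_n)-(\bar x,\bar y)\Vert\to 0$. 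Thus $(z_n,y_n)$ lies in the admissible set of (\ref{EvSNTDK1}), and the hypothesis forces $\vert\Gamma\varphi^\gamma(\cdot,y_n)\vert(z_n)$ to stay bounded away from $0$, contradicting the Ekeland bound $n^{-\gamma/2}\to 0$.

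The crux, and the only delicate point of the argument, is the three-way calibration of the Ekeland parameter $\lambda_n$: it must be (i) large enough compared with $\epsilon_n$ so $\epsilon_n/\lambda_n\to 0$, (ii) small enough in units of $\Vert(x_n,y_n)-(\bar x,\bar y)\Vert$ to preserve directional convergence, and (iii) strictly below $d(x_n,F^{-1}(y_n))$ so $z_n$ stays off the solution set. The gauge condition $d(y_n,F(x_n))\le n^{-1}\Vert(x_n,y_n)-(\bar x,\bar y)\Vert^{1/\gamma}$ built into Definition~\ref{Def-Direc} is precisely what reconciles all three constraints through the choice $\lambda_n=n^{\gamma/2}\epsilon_n$; without it, no single $\lambda_n$ could satisfy all three, which is also why the example $f(x)=x^2$ from the introduction can be directionally regular without being regular in the ordinary sense.
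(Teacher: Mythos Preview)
Your proof is correct in both directions. The necessity argument is essentially the paper's (via Lemma~\ref{Tron}, pick $z\in F^{-1}(y)$ realizing $d(x,F^{-1}(y))$ up to $\varepsilon$ and compute the nonlocal slope), with the added care of passing through a nearby $u$ so that the gauge in Lemma~\ref{Tron}, which is stated for $d(y,F(\cdot))$ rather than for $\varphi$, is actually satisfied; the paper glosses over that distinction.

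For sufficiency, your route is genuinely different and shorter. The paper applies Ekeland's principle \emph{twice}: a first time with $\lambda_n=\tfrac{1}{n}\Vert(x_n,y_n)-(\bar x,\bar y)\Vert$, which may well land $z_n$ inside $F^{-1}(y_n)$; this is then used to bound $d(x_n,F^{-1}(y_n))$ from above, after which a second Ekeland step with $\lambda_n'=\tfrac{1}{n}d(x_n,F^{-1}(y_n))$ produces the final witness $w_n\notin F^{-1}(y_n)$. Your single Ekeland step with $\lambda_n=n^{\gamma/2}\varepsilon_n$ dispatches all three requirements at once, because $n^{\gamma/2}<n$ keeps $\lambda_n$ strictly below $d(x_n,F^{-1}(y_n))>n\varepsilon_n$, while $\varepsilon_n\le n^{-\gamma}\Vert(x_n,y_n)-(\bar x,\bar y)\Vert$ makes $\lambda_n\le n^{-\gamma/2}\Vert(x_n,y_n)-(\bar x,\bar y)\Vert$ small enough for Lemma~\ref{Direc-conver}. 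What your calibration buys is a one-shot argument that makes the role of the gauge condition in Definition~\ref{Def-Direc} completely transparent; the paper's two-pass approach is a bit more robust in that it does not rely on this particular power balance, but it is noticeably longer.
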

\vskip 0.2cm
\begin{proof}
For the sufficiency, assume that  (\ref{EvSNTDK1}) holds and assume on the contrary that $F$ fails to be directional metrically $\gamma$-regular in the  direction $(u,v)$. Then for every $n\in\N$,  there exists $(x_n,y_n)$ with
$$
 0<\Vert x_n-\bar x\Vert+\Vert y_n-\bar y\Vert<\frac{1}{n};\;d(y_n,F(x_n))^\gamma\le \frac{1}{n^2} \Vert (x_n,y_n)-(\bar x,\bar y)\Vert;$$
$$ (x_n,y_n)\underset{(u,v)}\rightarrow (\bar x,\bar y)$$
\text{and such that}
$$d(x_n, F^{-1}(y_n))>n^2 \left[d(y_n,F(x_n))\right]^\gamma(\geq n^2 \varphi^\gamma(x_n,y_n)).$$
From the relation $d(y_n,F(x_n))^\gamma\le \frac{1}{n^2} \Vert (x_n,y_n)-(\bar x,\bar y)\Vert,$ it follows that
\begin{equation}\label{nantes}
\varphi^\gamma(x_n,y_n)\le \frac{1}{n^2} \Vert (x_n,y_n)-(\bar x,\bar y)\Vert.
\end{equation}
Applying  the Ekeland variational principle to  the lower semicontinuous function  $\varphi^\gamma(\cdot,y_n)$ on the Banach space $X$, one gets a point $z_n$ satisfying the  conditions:
$$\Vert z_n-x_n\Vert \le \frac{1}{n}\Vert (x_n,y_n)-(\bar x,\bar y)\Vert,\; \varphi^\gamma(z_n,y_n)\le \varphi^\gamma(x_n,y_n),$$ and
$$\varphi^\gamma(z_n,y_n)\le \varphi^\gamma(x,y_n)+\frac{1}{n}\Vert x-z_n\Vert, \;\forall\; x\in X.$$
Consequently,
 \begin{equation}\label{poussy-bis}
\vert\Gamma\varphi^\gamma(\cdot,y_n)\vert (z_n)\le \frac{1}{n},
\end{equation}
and by
$$ \Vert (x_n,y_n)-(\bar x,\bar y)\Vert\le  \Vert z_n-x_n\Vert+\Vert (z_n,y_n)-(\bar x,\bar y)\Vert\le \frac{1}{n}\Vert (x_n,y_n)-(\bar x,\bar y)\Vert+ \Vert (z_n,y_n)-(\bar x,\bar y) \Vert,$$
one obtains
\begin{equation}\label{poussy}
\Vert (x_n,y_n)-(\bar x,\bar y)\Vert\le \frac{n}{n-1}\Vert (z_n,y_n)-(\bar x,\bar y)\Vert.
\end{equation}
 Hence, combining (\ref{nantes}) and (\ref{poussy}) we obtain:
$$\varphi^\gamma(z_n,y_n)\le \varphi^\gamma(x_n,y_n)\le \frac{1}{n^2} \Vert (x_n,y_n)-(\bar x,\bar y)\Vert\le \frac{1}{n(n-1)}\Vert (z_n,y_n)-(\bar x,\bar y)\Vert.$$
The latter relation implies that
   $$\lim_{n\to\infty}\frac{\varphi^\gamma(z_n,y_n)}{\Vert (z_n,y_n)- (\bar x,\bar y)\Vert}=0.$$
Moreover, invoking Lemma \ref{Direc-conver},  relations $ (x_n,y_n) \underset{(u,v)}\rightarrow (\bar x,\bar y)$ and
$\Vert z_n -x_n\Vert \le 1/n\Vert (x_n,y_n)-(\bar x,\bar y)\Vert$  imply that \\ $(z_n,y_n)\underset{(u,v)}\rightarrow (\bar x,\bar y).$
Hence, by (\ref{EvSNTDK1}), $z_n\in F^{-1}(y_n)$ for $n$   large, say, $n\ge n_0.$
\vskip 0.1cm
\noindent For $n\ge n_0,$ as
$$ \varphi^\gamma(x_n,y_n)<\frac{1}{n^2}d(x_n,F^{-1}(y_n)),$$
 applying  again  the Ekeland variational principle to the lower semicontinuous $\varphi^\gamma(\cdot,y_n)$, one gets a point $w_n\in X$ satisfying the conditions:
$$\Vert w_n-x_n\Vert <\frac{1}{n}d(x_n,F^{-1}(y_n)),\; \varphi^\gamma(w_n,y_n)\le \varphi^\gamma(x_n,y_n)$$
and,
$$\varphi^\gamma(w_n,y_n)\le \varphi^\gamma(x,y_n)+\frac{1}{n}\Vert x-w_n\Vert, \;\forall\; x\in X.$$
We deduce that $$\vert\Gamma\varphi^\gamma (\cdot,y_n)\vert (w_n)\le \frac{1}{n}.$$ Moreover  we claim  that
\begin{equation}\label{dominicain}
\frac{\varphi^\gamma(w_n,y_n)}{\Vert (w_n,y_n)-(\bar x,\bar y)\Vert}\le \frac{1}{n^2}+\frac{1}{n^2(n^2-1)}.
\end{equation}
Indeed, as
\begin{align*}
\Vert (x_n,y_n)-(\bar x,\bar y)\Vert\le\\
&\le \Vert (x_n,y_n)-(w_n, y_n)\Vert+\Vert (w_n,y_n)-(\bar x,\bar y)\Vert\\
&= \Vert x_n-w_n\Vert+\Vert (w_n,y_n)-(\bar x,\bar y)\Vert\\
&\le \frac{1}{n}d(x_n,F^{-1}(y_n))+\Vert (w_n,y_n)-(\bar x,\bar y)\Vert\\
&\le \frac{1}{n}\Vert x_n-z_n\Vert+\Vert (w_n,y_n)-(\bar x,\bar y)\Vert\\
&\le  \frac{1}{n^2}\Vert (x_n,y_n)-(\bar x,\bar y)\Vert+\Vert (w_n,y_n)-(\bar x,\bar y)\Vert,
\end{align*}
we have
$$\Vert (x_n,y_n)-(\bar x,\bar y)\Vert\le \frac{n^2}{n^2-1}\Vert (w_n,y_n)-(\bar x,\bar y)\Vert,$$
and
\begin{align*}
\varphi^\gamma(w_n,y_n)&\le \varphi^\gamma(x_n,y_n)\le \frac{1}{n^2}\Vert (x_n,y_n)-(\bar x,\bar y)\Vert\\
 &\le  \frac{1}{n^2}\Vert (w_n,y_n)-(\bar x,\bar y)\Vert+\frac{1}{n^2}\Vert x_n-w_n\Vert\\
& \le \frac{1}{n^2}\Vert (w_n,y_n)-(\bar x,\bar y)\Vert+\frac{1}{n^3}d(x_n,F^{-1}(y_n))\le \frac{1}{n^2}\Vert (w_n,y_n)-(\bar x,\bar y)\Vert+\frac{1}{n^3}\Vert x_n-z_n\Vert\\
&\le \frac{1}{n^2}\Vert (w_n,y_n)-(\bar x,\bar y)\Vert+\frac{1}{n^4}\Vert (x_n,y_n)-(\bar x,\bar y)\Vert.
\end{align*}
Therefore,
$$\varphi^\gamma(w_n,y_n)\le \left(\frac{1}{n^2}+\frac{1}{n^2(n^2-1)}\right)\Vert (w_n,y_n)-(\bar x,\bar y)\Vert,$$
and (\ref{dominicain}) is established.
By virtue  of Lemma \ref{Direc-conver}, since $(x_n,y_n)\underset {(u,v)}\rightarrow(\bar x,\bar y)$ and $\Vert w_n-x_n\Vert\le \frac{1}{n^2}\Vert (x_n,y_n)-(\bar x,\bar y)\Vert,$ one has $(w_n,y_n)\underset{(u,v)}\rightarrow(\bar x,\bar y).$
\vskip 0.1cm
In conclusion, we have obtained a sequence $\{w_n\}$  which satisfies
$$w_n\notin F^{-1}(y_n);\; (w_n,y_n)\underset{(u,v)}\rightarrow(\bar x,\bar y);\; \frac{\varphi^\gamma(w_n,y_n)}{\Vert (w_n,y_n)-(\bar x,\bar y)\Vert}\to 0\quad\mbox{and}\; \vert\Gamma\varphi^\gamma (\cdot,y_n)\vert (w_n)\le \frac{1}{n}.$$
Hence, condition (\ref{EvSNTDK1}) is violated, and the sufficiency is proved.\\
\vskip 0.2cm
For the necessary part, suppose that there exist {reals}  $\tau, \delta>0$ such that
$$d(x,F^{-1}(y))\le \tau [d(y, F(x))]^\gamma$$
$\text{for all}\; \;(x,y)\in B((\bar x,\bar y),\delta)\cap ((\bar x,\bar y)+\cone B((u,v),\delta)) \;\text{with}\; \; d(y,F(x))\le \delta\Vert (x,y)-(\bar
x,\bar y)\Vert^{\frac{1}{\gamma}}.$\\
According to  Lemma \ref{Tron},
$$d(x,F^{-1}(y))\le \tau\varphi^\gamma(x,y)\;\forall (x,y)\in B((\bar x,\bar y),\delta)\cap ((\bar x,\bar y)+\cone B((u,v),\delta))\;\mbox{with}\;0<\frac{\varphi^\gamma(x,y)}{\Vert (x,y)-(\bar x,\bar y)\Vert}\le \delta.$$
Let $(x,y)\in B((\bar x,\bar y),\delta)\cap ((\bar x,\bar y)+\cone B((u,v),\delta))$ with $(x,y)\ne (\bar x,\bar y)$ and $0<\frac{\varphi^\gamma(x,y)}{\Vert (x,y)-(\bar x,\bar y)\Vert}\le \delta.$ Then, for every $\varepsilon>0,$ there exists an element  $z\in F^{-1}(y)$ such that $$\Vert x-z\Vert \le (\tau+\varepsilon) \varphi^\gamma(x,y)= (\tau+\varepsilon) [\varphi^\gamma(x,y)-\varphi^\gamma(z,y)] .$$ Consequently, $$\vert \Gamma \varphi^\gamma(\cdot,y)\vert(x)\geq \frac{1}{(\tau+\varepsilon)}.$$
As $\varepsilon>0$ is arbitrary, one obtains
 $$
\underset{\underset{\frac{\varphi^\gamma(x,y)}{\Vert x-\bar x\Vert}\to 0}{ (x,y)\underset {(u,v)} {\rightarrow}(\bar x,\bar y),\;\varphi(x,y)>0}}{\liminf} \vert \Gamma \varphi^\gamma(\cdot,y)\vert (x)\geq \frac{1}{\tau}>0,
$$
completing  the proof.\hfill{$\Box$} \end{proof}
\vskip 0.2cm

The theorem above  yields the following local  slope  characterization of the directional metric $\gamma-$regularity .
\begin{theorem}\label{Slope-Chac}
Let $X$ be a Banach space and $Y$ be a normed space. Suppose a closed  multifunction
$F:X\rightrightarrows Y$  and    a point $(\bar x,\bar y)\in X\times Y$  are given such that $\bar y\in F(\bar x).$  Let   $(u,v)\in X\times Y$, and $\gamma\in (0,1]$ be fixed.
If
\begin{equation} \label{EvSNT!}
\underset{\underset{\frac{\varphi^\gamma(x,y)}{\Vert (x,y)-(\bar x,\bar y)\Vert}\to 0}{ (x,y)\underset {(u,v)} {\rightarrow}(\bar x,\bar y),\;\varphi(x,y)>0}}{\liminf} \vert \nabla \varphi^\gamma(\cdot,y)\vert (x)>0,
\end{equation}
then there exist reals  $\tau, \delta>0$ such that
$$d(x,F^{-1}(y))\le \tau [d(y, F(x))]^\gamma$$
$\text{for all}\; \;(x,y)\in B((\bar x,\bar y),\delta)\cap ((\bar x,\bar y)+\cone B((u,v),\delta)) \;\text{with}\; d(y,F(x))\le \delta\Vert (x,y)-(\bar
x,\bar y)\Vert^{\frac{1}{\gamma}}.$\\
That is, $F$ is directionally  metrically $\gamma$-regular at $(\bar x,\bar y)$ in the  direction $(u,v)$ with modulus $\tau$.
\end{theorem}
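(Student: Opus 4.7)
The plan is to derive Theorem \ref{Slope-Chac} as a direct corollary of the nonlocal-slope characterization established in Theorem \ref{theo1}. The pivotal observation is the pointwise inequality $\vert\nabla f\vert(x) \le \vert\Gamma f\vert(x)$ already noted in the preliminaries: since the local slope is a limsup of difference quotients that is automatically dominated by the corresponding supremum, one has $\vert\nabla \varphi^\gamma(\cdot, y)\vert(x) \le \vert\Gamma \varphi^\gamma(\cdot, y)\vert(x)$ for every $(x, y) \in X \times Y$.

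Consequently, along any sequence $(x_n, y_n) \underset {(u,v)} {\rightarrow} (\bar x, \bar y)$ with $\varphi(x_n, y_n) > 0$ and $\varphi^\gamma(x_n, y_n)/\Vert (x_n, y_n) - (\bar x, \bar y)\Vert \to 0$, the values $\vert\Gamma \varphi^\gamma(\cdot, y_n)\vert(x_n)$ dominate the corresponding local slopes, so that the liminf over such sequences of the nonlocal slope is bounded below by the liminf of the local slope. Hence condition (\ref{EvSNT!}) forces condition (\ref{EvSNTDK1}), and the sufficiency part of Theorem \ref{theo1} delivers the claimed H\"older estimate $d(x, F^{-1}(y)) \le \tau [d(y, F(x))]^\gamma$ on the prescribed directional neighborhood with some constants $\tau, \delta > 0$, i.e.\ the directional metric $\gamma$-regularity of $F$ at $(\bar x, \bar y)$ in the direction $(u,v)$.

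No serious obstacle arises, since the theorem is essentially a corollary of Theorem \ref{theo1}. If a self-contained presentation is preferred, one can instead replay the Ekeland variational principle argument of that theorem almost verbatim: the Ekeland inequality $\varphi^\gamma(w_n, y_n) \le \varphi^\gamma(x, y_n) + (1/n)\Vert x - w_n\Vert$ bounds not only the nonlocal slope but, upon letting $x$ tend to $w_n$ inside the difference quotient, also the local slope $\vert\nabla \varphi^\gamma(\cdot, y_n)\vert(w_n)$ by $1/n$. This yields a sequence $(w_n, y_n) \underset {(u,v)} {\rightarrow} (\bar x, \bar y)$ satisfying $\varphi(w_n, y_n) > 0$, $\varphi^\gamma(w_n, y_n)/\Vert (w_n, y_n) - (\bar x, \bar y)\Vert \to 0$, and $\vert\nabla \varphi^\gamma(\cdot, y_n)\vert(w_n) \to 0$, in direct contradiction with (\ref{EvSNT!}). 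The only point requiring a line of care is to ensure that the set of admissible test sequences (directional convergence together with the ratio condition on $\varphi^\gamma$) is identical in (\ref{EvSNT!}) and (\ref{EvSNTDK1}), which is manifest from the statements.
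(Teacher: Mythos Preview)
Your proposal is correct and matches the paper's own approach: the paper does not give a separate proof of Theorem~\ref{Slope-Chac} but simply states that it is yielded by Theorem~\ref{theo1}, relying (implicitly) on exactly the inequality $\vert\nabla \varphi^\gamma(\cdot,y)\vert(x)\le \vert\Gamma \varphi^\gamma(\cdot,y)\vert(x)$ that you invoke. Your additional remark that the Ekeland estimate in the proof of Theorem~\ref{theo1} already bounds the local slope directly is also valid and in the same spirit.
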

\begin{remark}
{\rm Condition (\ref{EvSNT!}) of   Theorem \ref{Slope-Chac} fails to be a necessary condition when $\gamma\in ]0,1[$. To see this, let us consider the mapping $F:\R^2\to\R$ defined by
$$ F(x) = (x_1-x_2)^3,\quad x=(x_1,x_2)\in\R^2.$$ Here, $\R^2$ is equipped with the Euclidean norm.
For $x=(x_1,x_2)\in\R^2$ and $y\in\R,$ $F^{-1}(y)=\{(t+\sqrt[3]{y},t):\;\;t\in\R\}.$ Therefore, $d(x,F^{-1}(y))=|x_1-x_2-\sqrt[3]{y}|/\sqrt{2}.$
Noticing that  from the inequality  $0\leq 3(a+b)^2$, we deduce that \\$(a-b)^2\leq 4(a^2+ab+b^2)$ and therefore that
$ |a-b|^3\le 4|a^3-b^3|,$  for all  $a,b\in \R$. Using the last inequality yields
$$ d(x,F^{-1}(y))=|x_1-x_2-\sqrt[3]{y}|/\sqrt{2}\le 2^{1/6} |(x_1-x_2)^3-y|^{1/3}=2^{1/6}|y-F(x)|^{1/3}\;\;\mbox{for all}\; x\in\R^2,\, y\in\R. $$
Consequently, $F$ is metrically $1/3-$regular at $(0,0)$ (in  the direction $(0,0)$). However, for any $x=(x_1,x_1)\in\R^2$ and $y\in\R$ with $y\not=0,$ one has
$\vert \nabla \varphi^{1/3}(\cdot,y)\vert (x)=0,$ where, $\varphi(x,y)=|y-F(x)|.$
\vskip 0.2cm
 As stated in the next theorem, when $\gamma=1$, then condition  (\ref{EvSNT!}) becomes  a necessary  condition.
 }
\end{remark}
\vskip 0.2cm
\begin{theorem}\label{hebdo}
Let $X$ be a Banach space and $Y$ be a normed space. Suppose  a closed  multifunction
$F:X\rightrightarrows Y$ and   a point    $(\bar x,\bar y)\in X\times Y$  are given such that $\bar y\in F(\bar x).$   Let us fix $(u,v)\in X\times Y$.
Then, the following are equivalent:
\begin{enumerate}
\item[(i)]
 \begin{equation}\label{EvSNT1}
\underset{\underset{\frac{\varphi(x,y)}{\Vert (x,y)-(\bar x,\bar y)\Vert}\to 0}{ (x,y)\underset {(u,v)} {\rightarrow}(\bar x,\bar y),\;\varphi(x,y)>0}}{\liminf} \vert \nabla \varphi(\cdot,y)\vert (x)>0,
\end{equation}
\item [(ii)]there exist {reals}  $\tau>0$ and $\delta>0$ such that
$$d(x,F^{-1}(y))\le \tau d(y, F(x))$$
$\text{for all}\; \;(x,y)\in B((\bar x,\bar y),\delta)\cap ((\bar x,\bar y)+\cone B((u,v),\delta)) \;\text{with}\; d(y,F(x))\le \delta\Vert (x,y)-(\bar
x,\bar y)\Vert.$\\
That is $F$ is directionally  metrically regular at $(\bar x,\bar y)$ in  the direction $(u,v)$ with modulus $\tau$.
\end{enumerate}
\end{theorem}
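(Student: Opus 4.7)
The equivalence in Theorem~\ref{hebdo} has two directions. The implication (i) $\Rightarrow$ (ii) is immediate: it is exactly Theorem~\ref{Slope-Chac} applied with $\gamma=1$. The substance of the theorem thus lies in (ii) $\Rightarrow$ (i), which I plan to establish by contradiction, combining the Ekeland variational principle with the nonlocal slope characterization of Theorem~\ref{theo1}. Note that the necessity direction of Theorem~\ref{theo1} (for $\gamma=1$) shows that whenever (ii) holds with modulus $\tau$, one has $\liminf |\Gamma\varphi(\cdot,y)|(x)\ge 1/\tau$ over the same regime that appears in the liminf of (i).

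Assume (ii) and suppose (i) fails: there is a sequence $\{(x_n,y_n)\}$ satisfying $(x_n,y_n)\underset{(u,v)}{\to}(\bar x,\bar y)$, $\varphi(x_n,y_n)>0$, $\varphi(x_n,y_n)/\|(x_n,y_n)-(\bar x,\bar y)\|\to 0$, and $|\nabla\varphi(\cdot,y_n)|(x_n)\to 0$. Fix $\lambda\in (0,1/\tau)$. For each $n$, apply the Ekeland variational principle to the lower semicontinuous function $\varphi(\cdot,y_n)$ at the point $x_n$ with parameter $\lambda$: this yields $w_n\in X$ with $\|w_n-x_n\|\le\varphi(x_n,y_n)/\lambda$, $\varphi(w_n,y_n)\le\varphi(x_n,y_n)$, and
$$\varphi(w_n,y_n)\le\varphi(x,y_n)+\lambda\|x-w_n\|\quad\text{for all } x\in X,$$
from which $|\Gamma\varphi(\cdot,y_n)|(w_n)\le\lambda$. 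The residual condition $\varphi(x_n,y_n)=o(\|(x_n,y_n)-(\bar x,\bar y)\|)$ combined with Lemma~\ref{Direc-conver} (applied in $X\times Y$) implies $(w_n,y_n)\underset{(u,v)}{\to}(\bar x,\bar y)$, with the residual condition preserved along $(w_n,y_n)$. If $\varphi(w_n,y_n)>0$ for infinitely many $n$, then Theorem~\ref{theo1} forces $\liminf|\Gamma\varphi(\cdot,y_n)|(w_n)\ge 1/\tau>\lambda$, contradicting $|\Gamma\varphi(\cdot,y_n)|(w_n)\le\lambda$.

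Otherwise $w_n\in F^{-1}(y_n)$ for all $n$ large. Here I exploit the local slope hypothesis at $x_n$: pick $\delta\in(0,\lambda)$; eventually $|\nabla\varphi(\cdot,y_n)|(x_n)<\delta$, so there exists $r_n>0$ with
$$\varphi(x,y_n)\ge\varphi(x_n,y_n)-\delta\|x-x_n\|\quad\text{for } x\in B(x_n,r_n).$$
Evaluating at $x=w_n$---which lies in $B(x_n,r_n)$ for $n$ large, using the Ekeland bound $\|w_n-x_n\|\le\varphi(x_n,y_n)/\lambda\to 0$ together with the vanishing residual---gives $0=\varphi(w_n,y_n)\ge\varphi(x_n,y_n)-\delta\|w_n-x_n\|$, whence $\|w_n-x_n\|\ge\varphi(x_n,y_n)/\delta$. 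Combined with $\|w_n-x_n\|\le\varphi(x_n,y_n)/\lambda$ this yields $\delta\ge\lambda$, contradicting $\delta<\lambda$.

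\textbf{Main obstacle:} The delicate point is guaranteeing that the Ekeland-perturbed point $w_n$ falls inside the slope-validity radius $B(x_n,r_n)$; the radius $r_n$ has no a priori uniform control, but the Ekeland bound $\|w_n-x_n\|\le\varphi(x_n,y_n)/\lambda\to 0$ and the vanishing residual allow one to calibrate $\lambda$ and $\delta$ simultaneously so that both scales shrink compatibly. This is precisely where the feature $\gamma=1$ enters, since the linear relation $d(x,F^{-1}(y))\le\tau\varphi(x,y)$ is what forces $\|w_n-x_n\|$ and $\varphi(x_n,y_n)$ to have the same order of smallness.
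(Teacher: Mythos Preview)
Your argument for (ii)$\Rightarrow$(i) has a genuine gap in Case~B. You correctly observe that $|\nabla\varphi(\cdot,y_n)|(x_n)<\delta$ yields a radius $r_n>0$ with $\varphi(x,y_n)\ge\varphi(x_n,y_n)-\delta\|x-x_n\|$ on $B(x_n,r_n)$, and you want to evaluate this at $x=w_n$. But $r_n$ is dictated purely by the local geometry of $\varphi(\cdot,y_n)$ near $x_n$; nothing prevents $r_n$ from tending to zero much faster than $\|w_n-x_n\|\le\varphi(x_n,y_n)/\lambda$. Your proposed ``calibration of $\lambda$ and $\delta$'' cannot repair this: $\lambda$ and $\delta$ are fixed \emph{before} the sequence, while $r_n$ varies with $n$ in an uncontrolled way. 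Making $\lambda$ depend on $n$ so that $\varphi(x_n,y_n)/\lambda_n<r_n$ would force $\lambda_n\to\infty$ whenever $r_n$ is tiny, destroying the contradiction in Case~A.

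The paper's proof of (ii)$\Rightarrow$(i) is direct rather than by contradiction, and the key idea is precisely what your Ekeland scheme lacks: for a \emph{fixed} $(x,y)$ in the regime, one must produce points \emph{arbitrarily close} to $x$ where $\varphi(\cdot,y)$ has dropped at a rate $\ge 1/\tau$. Metric regularity alone only gives a single point in $F^{-1}(y)$ at distance $\tau\varphi(x,y)$ from $x$, which witnesses the \emph{nonlocal} slope but not the local one. The paper's device is to take $u_n\to x$, choose $y_n\in F(u_n)$ near $y$, form an intermediate point $z_n$ on the segment $[y,y_n]$ drifting toward $y_n$ at a carefully tuned rate, apply directional metric regularity at $(u_n,z_n)$ to obtain $\tilde{x}_n\in F^{-1}(z_n)$ with $\|\tilde{x}_n-u_n\|$ of order $\|y_n-z_n\|\to 0$, and then estimate $\varphi(\tilde{x}_n,y)\le\|y-z_n\|$. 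The balance between how fast $z_n$ approaches $y_n$ and how fast $\tilde{x}_n$ approaches $x$ is exactly what produces the local-slope bound $\ge 1/\tau$; this intermediate-point construction is the missing ingredient in your proposal.
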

\begin{proof} According to Theorem \ref{Slope-Chac}, condition (\ref{EvSNT1}) is sufficient to obtain (ii).
Conversely, suppose  that there exist $\tau>0$ and $\delta\in]0,1[$ such that
\begin{equation}\label{TT}
d(x,F^{-1}(y))\le \tau d(y, F(x))
\end{equation}
$\text{for all}\; \;(x,y)\in B((\bar x,\bar y),2\delta)\cap ((\bar x,\bar y)+\cone B((u,v),2\delta)) \;\text{with}\; d(y,F(x))\le 2\delta\Vert (x,y)-(\bar
x,\bar y)\Vert.$
\vskip 2mm
Let $(x,y)\in B((\bar x,\bar y),\delta)\cap ((\bar x,\bar y)+\cone B((u,v),\delta))$ be such that
 \begin{equation}\label{pierrot}
 0<d(y,F(x))\le \frac{\delta^2}{4}\Vert (x,y)-(\bar
x,\bar y)\Vert.
\end{equation}
Since $\displaystyle\varphi(x,y)=\sup_{\varepsilon>0} \inf_{w\in B(x,\varepsilon)}d(y,F(w))=\liminf_{w\to x}d(y, F(w))$, we have
$$\frac{\varphi(x,y)}{\Vert (x,y)-(\bar x,\bar y)\Vert}\le\frac{\delta^2}{4}$$
and we can write $\displaystyle\varphi(x,y)=\lim_{n\to +\infty} d(y, F(u_n)$) for some  sequence
 $\{u_n\}$ in $X$  converging to $x$.
Without loss of generality, we can suppose that $d(y,F(u_n))>(1-\frac{1}{n^2})\varphi(x,y),$ and
\begin{equation}\label{Tr7}
\Vert u_n-x\Vert<\frac{1}{n}\varphi(x,y)\le \frac{\delta^2\Vert (x,y)-(\bar
x,\bar y)\Vert}{4n} \leq \frac{\delta^3}{4n}< \frac{\delta}{n}
\end{equation}
and
\begin{equation}\label{valor}
d(y,F(u_n))<\left(1+\frac{1}{n}\right)\varphi(x,y).
\end{equation}
Note also that for every $n\in\N$, there exists $y_n\in F(u_n)$ such that
\begin{equation}\label{couzeix}
d(y,F(u_n))\le \Vert y-y_n\Vert<\left(1+\frac{1}{n}\right)d(y,F(u_n)).\end{equation}
Set  $z_n:=\frac{1+n^\frac{1}{2}}{1+n}y+\frac{n(1-n^{-\frac{1}{2}})}{1+n}y_n.$
\vskip 2mm
\begin{claim}\begin{equation}\label{star}\Vert y-z_n\Vert<\left(1-\frac{1}{n}\right)\varphi(x,y) \;\text{for large\;\; n.}\end{equation}
\end{claim}
Indeed,
\begin{equation}\label{cate}\Vert y_n-z_n\Vert \leq \frac{1+n^{1/2}}{1+n} \Vert y-y_n\Vert,\end{equation}
and
\begin{align*}
\Vert y-z_n\Vert=\frac{\ n(1-n^{-\frac{1}{2}})}{1+n}\Vert y-y_n\Vert &<\frac{n(1-n^{-\frac{1}{2}})}{1+n}\left(1+\frac{1}{n}\right)d(y,F(u_n))\nonumber\\
&<\left(1-n^{-\frac{1}{2}}\right)d(y,F(u_n))<\left(1-n^{-\frac{1}{2}}\right)\left(1+\frac{1}{n}\right)\varphi(x,y)\nonumber\\
&\le \left(1-n^{-\frac{1}{2}}\right)\left(1+n^{-\frac{1}{2}}\right)\varphi(x,y).
\end{align*}
Hence, $\Vert y-z_n\Vert<  (1-\frac{1}{n})\varphi(x,y),$ as claimed.
\vskip 2mm
\begin{claim}
 $z_n\notin F(u_n) \; \text{for large}\;\; n.$
\end{claim}
Indeed, using (\ref{star}), if  we suppose that   $z_n\in F(u_n)$ (for $n\geq n_0$), we deduce that
$$(1-\frac{1}{n^2})\varphi(x,y)< d(y,F(u_n))\le \Vert y-z_n\Vert<\left(1-\frac{1}{n}\right)\varphi(x,y),$$
 we have a contradiction.
Hence,  as claimed, for $n$ large,  $z_n\notin F(u_n)$.%
\begin{claim}$$(\star\star)\quad (u_n,z_n)\in(\bar x,\bar y)+\cone B((u,v),2\delta) \; \text{for large}\;\; n.$$
\end{claim}
As $(x,y)\in (\bar x,\bar y)+\cone B( (u,v),\delta),$ there is some $\lambda>0$ such that $\frac{(x,y)-(\bar x,\bar y)}{\lambda}\in B((u,v),\delta).$ Then,
$$ \lambda\ge \frac{\Vert (x,y)-(\bar x,\bar y)\Vert}{\Vert (u,v)\Vert+\delta}.$$
Let us observe that
$$\frac{\Vert z_n-y\Vert}{\lambda}\underset {by (\ref{star})}{<} \frac{(1-1/n)}{\lambda}\varphi(x,y)<\frac{(1-1/n)\delta^2}{4n\lambda}\Vert (x,y)-(\bar x,\bar y)\Vert\leq \frac{(1-\frac{1}{n})\delta^2}{4n}(\Vert (u,v)\Vert +\delta)\leq \frac{(1-\frac{1}{n})\delta^2}{4}\leq \frac{\delta}{2}.$$
 Therefore, one has
 $$\begin{array}{ll}\left\| \frac{(u_n,z_n)-(\bar x,\bar y)}{\lambda}-(u,v)\right\| &
 \\\leq \left\| \frac{ (u_n, z_n)-(x,y) }{\lambda}\right\| +\left\| \frac{(x,y)-(\bar x,\bar y)}{\lambda}-(u,v)\right\|
  \\ \leq \frac{\Vert u_n-x\Vert +\Vert z_n-y\Vert}{\lambda}+\delta&\\
 \le \frac{\delta}{\lambda n}+\frac{\delta}{2} +\delta< 2\delta.
\end{array}$$
This yields, ($\star\star$).
\begin{claim} $$(\star\star\star)\quad d(z_n,F(u_n))<2\delta^2\Vert (u_n,z_n)-(\bar x,\bar y)\Vert \;\; \text{for large}\;\; n.$$
\end{claim}
We know that  $$(\clubsuit)\quad d(z_n,F(u_n))<\frac{1+n^{1/2}}{1+n}(1+\frac{1}{n})^2\varphi(x,y)\le \frac{1+n^{1/2}}{1+n}(1+\frac{1}{n})^2\frac{\delta^2}{4}\|(x,y)-(\bar x,\bar y)\|.$$
 We have:
$$\|(x,y)-(\bar x,\bar y)\|\le \|(u_n,z_n)-( x,y)\|+\|(u_n,z_n)-(\bar x,\bar y)\|,$$
and
$$\|(u_n,z_n)-( x,y)\|=\Vert u_n-x\Vert+\Vert z_n-y\Vert<\frac{\delta^2}{4n}\Vert (x,y)-(\bar x,\bar y)\Vert+(1-1/n)\frac{\delta^2}{4n}\Vert (x,y)-(\bar x,\bar y)\Vert=\frac{\delta^2}{4n}\Vert (x,y)-(\bar x,\bar y)\Vert,$$
 it follows
 $$(\heartsuit)\quad \|(x,y)-(\bar x,\bar y)\|<\frac{1}{(1-\frac{\delta^2}{4n})}\|(u_n,z_n)-( \bar x,\bar y)\|.$$
Hence combining $(\clubsuit)$ and $(\heartsuit)$ we derive
 $$d(z_n,F(u_n))<\frac{1+n^{1/2}}{1+n}(1+\frac{1}{n})^2\frac{\delta^2}{4}\|(x,y)-(\bar x,\bar y)\|< \frac{1+n^{1/2}}{1+n}(1+\frac{1}{n})^2\frac{\delta^2}{4(1-\frac{\delta^2}{4n})}\|(u_n,z_n)-( \bar x,\bar y)\|.$$
Observing that the quantity $\frac{1+n^{1/2}}{1+n}\frac{(1+\frac{1}{n})^2}{(1-\frac{\delta^2}{4n})}$ tends to $0$ as $n$ tends to $+\infty$,  we obtain  $(\star\star\star)$, as desired.
\begin{claim}
\begin{equation}\label{vals}
\Vert (u_n,z_n)-(\bar x,\bar y)\Vert \leq 2\delta \;\text{for large}\;\; n.
\end{equation}
\end{claim}
\begin{align*}
\Vert (u_n,z_n)-(\bar x,\bar y)\Vert & \leq \Vert (u_n,z_n)-(x,y)\Vert +\Vert (x,y)-(\bar x,\bar y)\Vert\\
&\leq \frac{\delta}{n} +(1-\frac{1}{n})\varphi (x,y)+\Vert ((x,y)-(\bar x,\bar y)\Vert\\
&\leq   \frac{\delta}{n} +(2-\frac{1}{n})\frac{\delta^2}{4} \Vert (x,y)-(\bar x,\bar y)\Vert
\\
&\leq 2\delta.
\end{align*}
Combining relations ($\ref{vals}$), ($\star\star$) and ($\star\star\star$) we see that  the point $(u_n,z_n)$ verifies  (\ref{TT}). Hence by assumption we have
$$d(u_n, F^{-1}(z_n))\leq \tau d(z_n,F(u_n).$$
Next, select $\tilde{x}_n\in F^{-1}(z_n)$ such that
\begin{align}
\Vert \tilde{x}_n-u_n\Vert&\le (1+n^{-\frac{1}{2}})d(u_n, F^{-1}(z_n))\nonumber\\
&\le\tau (1+n^{-\frac{1}{2}})d(z_n,F(u_n)\nonumber)\\
&\le\tau  (1+n^{-\frac{1}{2}})\Vert z_n-y_n\Vert\nonumber\\
&=\tau  (1+n^{-\frac{1}{2}})\frac{(1+n^\frac{1}{2})}{1+n}\Vert y-y_n\Vert\nonumber\\
&=\tau \frac{2+n^\frac{1}{2}+n^{-\frac{1}{2}}}{1+n}\Vert y-y_n\Vert\label{marionelie}.
\end{align}
Consequently,
$\Vert \tilde{x}_n-u_n\Vert <\tau \frac{ 2+n^1/2+n^{-1/2}}{1+n}\Big(1+\frac{1}{n}\Big)^2 \varphi (x,y)$ and therefore, $\lim_{n\to\infty}\Vert \tilde{x}_n-x\Vert=0$. Next,  for large $n$, we have the following estimate:
\begin{align*}
\varphi(x,y)-\varphi(\tilde{x}_n,y)&\underset{by (\ref{valor})}>\frac{n}{n+1}\Big(d(y,F(u_n))-d(y,F(\tilde{x}_n))\Big)\\
&\underset{by (\ref{couzeix})}>\frac{n^2}{(n+1)^2}\Vert y-y_n\Vert-\Vert y-z_n\Vert,
\end{align*}
and by the definition of $z_n$, we derive
$$\varphi(x,y)-\varphi(\tilde{x}_n,y)>\frac{n^{3/2}-n+n^{1/2}}{(n+1)^2}\Vert y-y_n\Vert.$$
Thus, using also (\ref{Tr7}), we obtain
\begin{align*}
\frac{\varphi(x,y)-\varphi(\tilde{x}_n,y)}{\Vert \tilde{x}_n-x\Vert}
&\geq \frac{\varphi(x,y)-\varphi(\tilde{x}_n,y)}{\Vert u_n-x\Vert+\Vert\tilde{x}_n-u_n\Vert}\\
&\underset{by (\ref{marionelie})}{>} \frac{(n^{3/2}-n+n^{1/2})(n+1)^{-2}\Vert y-y_n\Vert}{\delta n^{-1}+\tau(2+n^\frac{1}{2}+n^{-\frac{1}{2}})(n+1)^{-1}\Vert y-y_n\Vert }\\
&=\frac{n^{3/2}-n+n^{1/2}}{(2+n^\frac{1}{2}+n^{-\frac{1}{2}})(n+1)}\frac{1}{\tau+\frac{\delta (n+1)}{n(2+n^{1/2}+n^{-\frac{1}{2}})}\Vert y-y_n\Vert^{-1}}.
\end{align*}
Since $\frac{n^{3/2}-n+n^{1/2}}{(2+n^\frac{1}{2}+n^{-\frac{1}{2}})(n+1)}\to 1$, $\lim_{n\to\infty}\Vert y-y_n\Vert=\varphi(x,y)>0$
and
$$\frac{\delta (n+1)}{n(2+n^\frac{1}{2}+n^{-\frac{1}{2}})}\cdot \frac{1}{\Vert y-y_n\Vert}\to 0,$$ we deduce that
$$\vert\nabla \varphi(\cdot,y)\vert(x)= \limsup_{u\to x} \frac{\varphi (x,y)-\varphi (u,y)}{\Vert x-u\Vert}\geq \limsup_{n\to\infty}\frac{\varphi(x,y)-\varphi(\tilde{x}_n,y)}{\Vert \tilde{x}_n-x\Vert}\geq\frac{1}{\tau},$$
establishing the proof.\hfill{$\Box$}\end{proof}
\section{Directional Metric Regularity under Perturbations}
In \cite  {RefDoLR, RefDoL, RefIo2, SiamNT}, it was established that  metric regularity is stable under Lipschitz (single-valued or set-valued) perturbations with a  suitable Lipschitz modulus. We shall show that directional metric regularity (with $\gamma=1$) is also stable under suitable Lipschitz perturbations. Recall that a mapping  $g:X\to Y$ between  normed spaces is Hadamard differentiable at $\bar x\in X$ with respect to the direction $u\in X,$ if the following limit exists:
$$ \lim_{\underset{w\to u}{t\downarrow 0}}\frac{g(\bar x+tw)-g(\bar x)}{t}=Dg(\bar x)(u).$$
Obviously, if $g$ is locally Lipschitz around $\bar x,$ then $g$ is Hadamard  differentiable at $\bar x$ with respect to the direction $0$ and $Dg(\bar x)(0)=0.$
\vskip 0.5cm
\begin{theorem}\label{Pertu}
Let $X$ be a Banach space, $Y$ be a normed space. For  a given $(u,v)\in X\times Y,$ suppose $F: X\rightrightarrows Y$ is a set-valued mapping which is directionally metrically regular at $(\bar x,\bar y)\in\gph F$
in direction $(u,v)$ with modulus  $\tau>0.$ If  $g: X\rightarrow Y$ is   locally  Lipschitz  around $\bar x$ with a constant $\lambda>0$ satisfying $\lambda\tau<1$ and  if  $g$ is
 Hadamard  differentiable at $\bar x$ with respect to  the direction $u,$  then  the set-valued mapping $F+g$ is directionally metrically regular at $(\bar x, \bar y+g(\bar x))$ in the  direction $(u,v+Dg(\bar x)(u))$.
\end{theorem}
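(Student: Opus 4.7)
The plan is to pass from $F$ to $F+g$ via the slope characterization of Theorem \ref{hebdo} and to exploit the identity
$$\varphi_{F+g}(x,y) \;=\; \varphi_F\bigl(x,\,y-g(x)\bigr),$$
which follows from continuity of $g$ and the fact that $d(\cdot,F(u))$ is $1$-Lipschitz. Combined with the $\lambda$-Lipschitz property of $g$ and the $1$-Lipschitz dependence of $\varphi_F$ in its second argument, for $z$ near $x$ one has $|\varphi_F(z,y-g(z)) - \varphi_F(z,y-g(x))| \leq \lambda\|z-x\|$, and hence the slope-perturbation estimate
$$|\nabla \varphi_{F+g}(\cdot, y)|(x) \;\geq\; |\nabla \varphi_F(\cdot,\,y-g(x))|(x) - \lambda.$$

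Set $\tilde y := \bar y + g(\bar x)$ and $\tilde v := v + Dg(\bar x)(u)$. Two transfer facts are then needed. First, writing any $(x,y) = (\bar x, \tilde y) + t(u',v')$ with $t \downarrow 0$ and $(u',v') \to (u,\tilde v)$, Hadamard differentiability of $g$ at $\bar x$ in the direction $u$ gives $g(x) - g(\bar x) = t\,Dg(\bar x)(u) + o(t)$, so that
$$y - g(x) - \bar y \;=\; t\bigl(v' - Dg(\bar x)(u)\bigr) + o(t) \;=\; tv + o(t),$$
and therefore $(x, y-g(x)) \underset{(u,v)}{\to} (\bar x, \bar y)$. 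Second, the triangle inequality and Lipschitzness of $g$ yield
$$\|(x,y)-(\bar x,\tilde y)\| \;\leq\; (1+\lambda)\,\|(x,\,y-g(x))-(\bar x,\bar y)\|,$$
so that the gauge condition $\varphi_{F+g}(x,y)/\|(x,y)-(\bar x,\tilde y)\| \to 0$ forces $\varphi_F(x,y-g(x))/\|(x,y-g(x))-(\bar x,\bar y)\| \to 0$. Of course $\varphi_{F+g}(x,y) > 0$ is equivalent to $\varphi_F(x,y-g(x)) > 0$.

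Combining these transfers with Theorem \ref{hebdo} applied to $F$ yields $\liminf|\nabla \varphi_F(\cdot,y-g(x))|(x) \geq 1/\tau$ along every admissible net in the direction $(u,\tilde v)$ for $F+g$; together with the perturbation estimate and the hypothesis $\lambda\tau < 1$ this gives
$$\liminf |\nabla \varphi_{F+g}(\cdot, y)|(x) \;\geq\; \frac{1}{\tau} - \lambda \;=\; \frac{1-\lambda\tau}{\tau} \;>\; 0,$$
and the sufficient direction of Theorem \ref{hebdo} concludes that $F+g$ is directionally metrically regular at $(\bar x, \tilde y)$ in the direction $(u, \tilde v)$, with modulus at most $\tau/(1-\lambda\tau)$. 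The main obstacle, and the only place Hadamard differentiability is genuinely used, is the first transfer step: one needs the first-order expansion of $g$ to hold uniformly along nets $(u'_n,v'_n) \to (u,\tilde v)$ rather than along a single ray, which is exactly Hadamard's strengthening of G\^ateaux differentiability.
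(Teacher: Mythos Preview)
Your proof is correct and takes a genuinely different route from the paper's. The paper works with the \emph{nonlocal} slope criterion (Theorem~\ref{theo1}): it fixes an admissible $(x,y)$ for $F+g$, chooses a sequence $x_n\to x$ realising $\varphi_{F+g}(x,y)$, verifies by a string of explicit estimates that $(x_n,\,y-g(x_n))$ lies in the directional ball/cone and satisfies the gauge bound for $F$, then invokes the metric-regularity inequality of $F$ directly to produce $z_n\in F^{-1}(y-g(x_n))$ with $\|x_n-z_n\|\le(1+1/n)\tau\,d(y-g(x_n),F(x_n))$, and finally bounds $|\Gamma\varphi_{F+g}(\cdot,y)|(x)\ge\tau^{-1}-\lambda$.

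You instead use the \emph{local} slope and both implications of Theorem~\ref{hebdo}. The identity $\varphi_{F+g}(x,y)=\varphi_F(x,y-g(x))$ (valid because $g$ is continuous and $d(\cdot,F(u))$ is $1$-Lipschitz) lets you decouple the argument into three independent pieces: (a) a pointwise slope-perturbation inequality $|\nabla\varphi_{F+g}(\cdot,y)|(x)\ge|\nabla\varphi_F(\cdot,y-g(x))|(x)-\lambda$ coming from the $1$-Lipschitz dependence of $\varphi_F$ on its second argument; (b) a transfer of the directional constraint $(x,y)\underset{(u,\tilde v)}{\to}(\bar x,\tilde y)\Rightarrow(x,y-g(x))\underset{(u,v)}{\to}(\bar x,\bar y)$, which is exactly where Hadamard differentiability is needed; and (c) a transfer of the gauge constraint via $\|(x,y)-(\bar x,\tilde y)\|\le(1+\lambda)\|(x,y-g(x))-(\bar x,\bar y)\|$. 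Feeding (b) and (c) into the necessity half of Theorem~\ref{hebdo} for $F$ gives $|\nabla\varphi_F(\cdot,y-g(x))|(x)\ge 1/\tau$, and (a) plus the sufficiency half for $F+g$ concludes. Both arguments yield the same modulus $\tau/(1-\lambda\tau)$. Your approach is shorter and more modular, at the cost of relying on the rather delicate necessity direction of Theorem~\ref{hebdo}; the paper's approach is more self-contained in that it uses only the raw regularity inequality for $F$ together with the easier Theorem~\ref{theo1} for $F+g$.
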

\vskip 0.2cm\begin{proof}  Since $g$ is locally Lipschitz around $\bar x$  with constant $\lambda $ and  $F$ is directionally metrically regular at $(\bar x,\bar y)\in \gph F$
in direction $(u,v)$  with modulus $ \tau >0$, take  $\delta\in (0,\lambda)$ such that
$$\Vert g(x)-g(z)\Vert\le\lambda \Vert x-z\Vert \;\; \forall x, z\in B(\bar x,\delta),$$
and
\begin{equation}\label{DMR-F}
\begin{array}{ll} d(x,F^{-1}(y))\le\tau d(y,F(x))
&\mbox{for all}\;\;(x,y)\in B((\bar x,\bar y),\delta)\cap ((\bar x,\bar y)+\cone B((u,v),\delta)),\\
&\text{with}\; \; d(y,F(x))\le 2\delta\Vert (x,y)-(\bar
x\,\bar y)\Vert.
\end{array}
\end{equation}
Taking into account the Hadamard differentiability of $g$ at $\bar x$ with respect to the direction $u$, pick some  $\varepsilon \in \left]0,\frac{\delta}{2}\right[$ and then  some $\eta\in (0,1)$ such that
\begin{equation}\label{Diff}
\left\Vert \frac{g(\bar x+tw)-g(\bar x)}{t}-Dg(\bar x)(u)\right\Vert<\varepsilon\quad\mbox{for all}\; t\in]0,\eta[,\; w\in B(u,\eta).
\end{equation}
Set
$$ \varphi_{F+g}(x,y)=\liminf_{z\to x}d(y,F(z)+g(z)),\;\; (x,y)\in X\times Y. $$
Take $\rho>0$ such that
\[
\rho< \left\{ \begin{array}{ll}
\min{\left\{\frac{\delta}{\lambda +3}, {\delta-\varepsilon}, \frac{\eta \Vert u\Vert }{1+\eta}, \frac{\delta}{2\tau +1}\right\}}, &\;\;\text{if}\;\; u\ne 0\\
$$\rho<\min{\left\{\frac{\delta}{\lambda +3}, {\delta-\varepsilon}, \frac{\delta}{2\tau +1}\right\}}, &\;\; \text{if}\;\; u=0
\end{array} \right. \]
and  fix
$$ (x,y)\in B( (\bar x,\bar y+g(\bar x)),\rho)\cap \left((\bar x,\bar y+g(\bar x))+\cone B( (u,v+Dg(\bar x)(u)),\rho)\right)$$
with
\begin{equation}\label{NNN}
0<d(y,F(x)+g(x))<\rho\Vert(x,y)-(\bar x,\bar y+g(\bar x))\Vert,
\end{equation} and select  a sequence $\{x_n\}$ converging to $x$  such that
$\varphi_{F+g}(x,y)=\lim_{n\to\infty}d(y,F(x_n)+g(x_n)).$
\vskip 2mm
With the  aim of making the proof clearer, we will establish some claims.
\begin{claim}
\begin{equation}\label{TM$$ DK}
\Vert (x_n,y)-(\bar x,\bar y+g(\bar x))\Vert<\delta, \;\text{ for }\; n \; \text{large}.
\end{equation}
\end{claim}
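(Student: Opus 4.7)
The plan is to prove the claim by a direct triangle inequality argument, exploiting the bound $\rho<\delta$ built into the choice of $\rho$ and the convergence $x_n\to x$.

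First I would recall that the norm on $X\times Y$ was fixed to be the sum norm, so
\[
\Vert (x_n,y)-(\bar x,\bar y+g(\bar x))\Vert = \Vert x_n-\bar x\Vert + \Vert y-\bar y-g(\bar x)\Vert.
\]
The second summand is controlled by the hypothesis $(x,y)\in B((\bar x,\bar y+g(\bar x)),\rho)$, which gives
\[
\Vert x-\bar x\Vert + \Vert y-\bar y-g(\bar x)\Vert < \rho.
\]
Splitting the first summand with the triangle inequality $\Vert x_n-\bar x\Vert \le \Vert x_n-x\Vert + \Vert x-\bar x\Vert$, I get
\[
\Vert (x_n,y)-(\bar x,\bar y+g(\bar x))\Vert \le \Vert x_n-x\Vert + \Vert (x,y)-(\bar x,\bar y+g(\bar x))\Vert < \Vert x_n-x\Vert + \rho.
\]

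Since $x_n\to x$ by construction (the sequence was chosen to realize the liminf defining $\varphi_{F+g}(x,y)$), for all sufficiently large $n$ we have $\Vert x_n-x\Vert < \delta-\rho$. This last quantity is strictly positive because $\rho$ was chosen smaller than $\delta/(2\tau+1)<\delta$ (and in fact smaller than several quantities each bounded above by $\delta$). Combining these two estimates yields $\Vert (x_n,y)-(\bar x,\bar y+g(\bar x))\Vert < (\delta-\rho)+\rho = \delta$ for $n$ large, establishing \eqref{TM$$ DK}.

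There is no real obstacle here: the content of the claim is simply that $(x_n,y)$ stays inside the ball $B((\bar x,\bar y+g(\bar x)),\delta)$ on which the Lipschitz estimate for $g$ and the directional metric regularity estimate \eqref{DMR-F} for $F$ are both available. The claim is a bookkeeping step that prepares the way for the substantive later claims, where one must verify that $(x_n, z_n)$ (with a carefully constructed $z_n$ close to $y$) actually lies in the directional cone $(\bar x,\bar y)+\cone B((u,v),\delta)$ and satisfies the gauge $d(z_n,F(x_n))\le 2\delta\Vert (x_n,z_n)-(\bar x,\bar y)\Vert$ so that \eqref{DMR-F} can be applied — those are the steps where the Hadamard differentiability of $g$ and the inequality $\lambda\tau<1$ will do the real work.
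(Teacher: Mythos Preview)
Your argument is correct. In fact the paper states this claim without proof at all --- it moves directly to the next claim, treating this one as immediate. Your triangle-inequality argument using $\Vert x_n-x\Vert\to 0$ and $\rho<\delta$ is exactly the obvious verification the authors are suppressing.
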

\begin{claim}
\begin{equation} \label{rel1}(x_n,y)\in (\bar x,\bar y +g(\bar x))+ \cone B( (u,v+Dg(\bar x)(u)),\rho)  \;\; \text{ for }\; n \;\text{large.}\end{equation}
\end{claim}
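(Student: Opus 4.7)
The plan is to exploit the fact that $\cone B((u,v+Dg(\bar x)(u)),\rho)$ is a union of open balls $\lambda B((u,v+Dg(\bar x)(u)),\rho)$ over $\lambda\ge 0$, and then transfer cone membership from $(x,y)$ to the nearby pair $(x_n,y)$ via a continuity-type perturbation argument that keeps $\lambda$ fixed.

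First I would rule out the degenerate possibility $(x,y)=(\bar x,\bar y+g(\bar x))$: if equality held, then since $\bar y\in F(\bar x)$, we would have $y=\bar y+g(\bar x)\in F(\bar x)+g(\bar x)=F(x)+g(x)$, so $d(y,F(x)+g(x))=0$, contradicting the strict positivity assumed in \eqref{NNN}. By definition of the conic hull, there therefore exist a real $\lambda>0$ and a point $(u_0,v_0)\in B((u,v+Dg(\bar x)(u)),\rho)$ such that
$$(x,y)-(\bar x,\bar y+g(\bar x))=\lambda(u_0,v_0).$$
Set $\rho_0:=\|(u_0,v_0)-(u,v+Dg(\bar x)(u))\|$, so that $\rho_0<\rho$, and fix $\lambda$ once and for all for the rest of the argument.

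Next I would reuse the same $\lambda$ for each $x_n$ and estimate the defect via the triangle inequality:
$$\left\|\frac{(x_n,y)-(\bar x,\bar y+g(\bar x))}{\lambda}-(u,v+Dg(\bar x)(u))\right\|\le\left\|\frac{(x_n-x,0)}{\lambda}\right\|+\|(u_0,v_0)-(u,v+Dg(\bar x)(u))\|=\frac{\|x_n-x\|}{\lambda}+\rho_0.$$
Since $x_n\to x$ and $\rho-\rho_0>0$, for all $n$ sufficiently large one has $\|x_n-x\|<\lambda(\rho-\rho_0)$, and the right-hand side above is strictly smaller than $\rho$. Hence $\frac{(x_n,y)-(\bar x,\bar y+g(\bar x))}{\lambda}\in B((u,v+Dg(\bar x)(u)),\rho)$, which by multiplying through by $\lambda$ yields $(x_n,y)\in(\bar x,\bar y+g(\bar x))+\lambda B((u,v+Dg(\bar x)(u)),\rho)\subseteq(\bar x,\bar y+g(\bar x))+\cone B((u,v+Dg(\bar x)(u)),\rho)$, which is exactly \eqref{rel1}.

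The argument is essentially mechanical; there is no genuine obstacle beyond the mild care of excluding the $\lambda=0$ branch of the conic hull, which is precisely what the hypothesis $d(y,F(x)+g(x))>0$ from \eqref{NNN} buys us. The key structural point to highlight is that the ball $B(\cdot,\rho)$ is open, so a strictly interior point $(u_0,v_0)$ survives an arbitrarily small translation in the $x$-coordinate, and $x_n\to x$ supplies exactly such a translation.
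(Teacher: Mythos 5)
Your proof is correct and takes essentially the same route as the paper: the paper observes that, thanks to (\ref{NNN}), the point $(x,y)-(\bar x,\bar y+g(\bar x))$ is a \emph{nonzero} element of the conic hull of an open ball and hence lies in $\inte\left(\cone B((u,v+Dg(\bar x)(u)),\rho)\right)$, so that $(x_n,y)-(\bar x,\bar y+g(\bar x))$ belongs to the cone for $n$ large, and your fixed-$\lambda$ triangle-inequality estimate is exactly a quantitative unpacking of that openness argument. Your exclusion of the degenerate case is also fine (indeed the strict inequalities $0<d(y,F(x)+g(x))<\rho\Vert(x,y)-(\bar x,\bar y+g(\bar x))\Vert$ in (\ref{NNN}) already force $(x,y)\neq(\bar x,\bar y+g(\bar x))$, even without invoking $\bar y\in F(\bar x)$).
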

Indeed, since $(x,y)-(\bar x,\bar y+g(\bar x))\notin \cone B( (u,v+Dg(\bar x)(u)),\rho)\setminus\{(0,0)\}, $  for large $n$, one has $$(x,y)-(\bar x,\bar y+g(\bar x))\in \inte \left(\cone B(
(u,v+Dg(\bar x)(u)),\rho\right.))$$ It follows  that  $(x_n,y) -(\bar x,\bar y +g(\bar x))\in  \cone B( (u,v+Dg(\bar x)(u)),\rho).$
\begin{claim}
\begin{equation} \label{madoumier} 0<d(y,F(x_n)+g(x_n))<\rho\Vert(x_n,y)-(\bar x,\bar y+g(\bar x))\Vert \quad \text{for } \; n \;\text{large}. \end{equation}
\end{claim}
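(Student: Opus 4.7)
The plan is to split the claim into its two inequalities and derive each from properties of the approximating sequence $\{x_n\}$ chosen so that $\varphi_{F+g}(x,y)=\lim_{n\to\infty} d(y,F(x_n)+g(x_n))$. The upper bound will come from comparing $\varphi_{F+g}(x,y)$ against $d(y,F(x)+g(x))$ and invoking the strict estimate in hypothesis (\ref{NNN}); the positivity will come from a closed-graph argument applied to $F+g$.

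For the upper bound, I first observe that by the very definition of the lower semicontinuous envelope (taking $z=x$ inside the liminf) one has $\varphi_{F+g}(x,y)\le d(y,F(x)+g(x))$. Combined with (\ref{NNN}), this yields $\varphi_{F+g}(x,y)<\rho\Vert (x,y)-(\bar x,\bar y+g(\bar x))\Vert$. Since $x_n\to x$, the norms $\Vert(x_n,y)-(\bar x,\bar y+g(\bar x))\Vert$ converge to $\Vert(x,y)-(\bar x,\bar y+g(\bar x))\Vert$, so for $n$ sufficiently large the strict inequality above together with the convergence $d(y,F(x_n)+g(x_n))\to\varphi_{F+g}(x,y)$ delivers $d(y,F(x_n)+g(x_n))<\rho\Vert(x_n,y)-(\bar x,\bar y+g(\bar x))\Vert$, as required.

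For the strict positivity, which I expect to be the more delicate step, I argue by contradiction that $\varphi_{F+g}(x,y)>0$. If instead $\varphi_{F+g}(x,y)=0$, then there exist $z_n\to x$ and points $w_n\in F(z_n)$ with $w_n+g(z_n)\to y$. Since $g$ is locally Lipschitz, and hence continuous at $x$, we have $g(z_n)\to g(x)$ and therefore $w_n\to y-g(x)$. The pairs $(z_n,w_n)\in\gph F$ then converge to $(x,y-g(x))$, and closedness of $\gph F$ forces $y-g(x)\in F(x)$, i.e.\ $d(y,F(x)+g(x))=0$. This contradicts the strict positivity built into (\ref{NNN}). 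Hence $\varphi_{F+g}(x,y)>0$, and consequently $d(y,F(x_n)+g(x_n))>0$ for all sufficiently large $n$, finishing the proof of the claim. The only non-routine ingredient is this closed-graph argument; the rest is careful bookkeeping with strict inequalities along convergent sequences.
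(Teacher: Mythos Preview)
Your argument is correct and follows essentially the same route as the paper: both use that the closedness of $\gph F$ (together with continuity of $g$) forces $\varphi_{F+g}(x,y)>0$, and both derive the upper bound from $\varphi_{F+g}(x,y)\le d(y,F(x)+g(x))$ combined with \eqref{NNN} and convergence. The only difference is cosmetic: the paper packages the upper bound as a contradiction on a violating subsequence, whereas you pass to the limit directly; your write-up is in fact more explicit about why the closed-graph step yields $\varphi_{F+g}(x,y)>0$.
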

Indeed, reasoning ad absurdum, suppose the existence of a  subsequence still denoted by $\{x_n\}$  such that  $$d(y,F(x_n)+g(x_n))\geq \rho\Vert(x_n,y)-(\bar x,\bar y+g(\bar x))\Vert.$$
Since $F$ has  closed graph then $$0<\varphi_{F+g}(x,y)=\lim _{n\to +\infty}d(y,F(x_n)+g(x_n))\le d(y,F(x)+g(x)), $$
due to the fact that  $\{x_n\}$ converges to $x$
and  thanks to (\ref{NNN}), we obtain a contradiction.

\begin{claim}\label{zee}
For $n$ sufficiently large,
\begin{equation}\label{music}
(x_n,y-g(x_n))\in B((\bar x,\bar y), \delta),
\end{equation}
\begin{equation}\label{music2}
d(y- g(x_n), F(x_n))\leq 2\delta \Vert (x_n,y-g(x_n))-(\bar x,\bar y)\Vert.
\end{equation}
and
\begin{equation}\label{music1}
(x_n,y-g(x_n))\in (\bar x, \bar y) +\cone B((u,v),\delta).
\end{equation}
\end{claim}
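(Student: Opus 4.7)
The plan is to verify the three inclusions in turn. All three concern the shifted point $(x_n, y-g(x_n))$, which should behave like $(x,y)$ after accounting for the perturbation $g$; the common bridge is the identity $d(y-g(x_n),F(x_n))=d(y,F(x_n)+g(x_n))$ together with the Lipschitz estimate $\Vert g(x_n)-g(\bar x)\Vert\le \lambda\Vert x_n-\bar x\Vert$. The first two relations are routine bookkeeping with triangle inequalities and the Lipschitz constant; the third is the substantive point and requires Hadamard differentiability (\ref{Diff}).

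For (\ref{music}), I would expand
$$\Vert (x_n,y-g(x_n))-(\bar x,\bar y)\Vert=\Vert x_n-\bar x\Vert+\Vert y-g(x_n)-\bar y\Vert$$
and bound the second summand by $\Vert y-g(\bar x)-\bar y\Vert+\lambda\Vert x_n-\bar x\Vert<\rho+\lambda\Vert x_n-\bar x\Vert$ using the Lipschitz estimate. Since $x_n\to x$ and $\Vert x-\bar x\Vert<\rho$, for $n$ large the total is at most $(\lambda+2)\rho+o(1)$, which is under $\delta$ by the choice $\rho<\delta/(\lambda+3)$. For (\ref{music2}), the identity and Claim (\ref{madoumier}) yield $d(y-g(x_n),F(x_n))<\rho\Vert(x_n,y)-(\bar x,\bar y+g(\bar x))\Vert$; the Lipschitz estimate gives $\Vert(x_n,y)-(\bar x,\bar y+g(\bar x))\Vert\le(1+\lambda)\Vert(x_n,y-g(x_n))-(\bar x,\bar y)\Vert$, and then $\rho(1+\lambda)\le 2\delta$ is ensured by the same constraint on $\rho$.

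The main obstacle is (\ref{music1}): the given directional containment of $(x,y)$ is in direction $(u,v+Dg(\bar x)(u))$ (appropriate for $F+g$), but we need a containment of the shifted point in direction $(u,v)$ (appropriate for $F$). The Hadamard condition (\ref{Diff}) is exactly what supplies this direction conversion. Concretely, write $(x,y)=(\bar x,\bar y+g(\bar x))+t(w,s)$ with $(w,s)\in B((u,v+Dg(\bar x)(u)),\rho)$ and $t>0$. The constraint $\rho<\eta\Vert u\Vert/(1+\eta)$ (when $u\ne 0$) forces $t<\eta$ and $w\in B(u,\eta)$, activating (\ref{Diff}). Setting $w_n:=(x_n-\bar x)/t$, we have $w_n\to w$ and, for $n$ large, $w_n\in B(u,\eta)$, so
$$\frac{y-g(x_n)-\bar y}{t}=s-\frac{g(\bar x+tw_n)-g(\bar x)}{t}$$
lies within $\varepsilon$ of $s-Dg(\bar x)(u)$ by (\ref{Diff}). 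Therefore
$$\left\Vert\frac{(x_n,y-g(x_n))-(\bar x,\bar y)}{t}-(u,v)\right\Vert\le \Vert w_n-w\Vert+\Vert w-u\Vert+\Vert s-Dg(\bar x)(u)-v\Vert+\varepsilon\le \Vert w_n-w\Vert+2\rho+\varepsilon,$$
which is $<\delta$ for $n$ large by the constraint $\rho<\delta-\varepsilon$ (strengthened, if necessary, to absorb the factor of two). This places $(x_n,y-g(x_n))-(\bar x,\bar y)$ inside $\cone B((u,v),\delta)$. The case $u=0$ is easier since $Dg(\bar x)(0)=0$ makes the direction translation trivial.
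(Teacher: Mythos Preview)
Your proposal is correct and follows essentially the same route as the paper. For (\ref{music}) and (\ref{music2}) both you and the paper use the identity $d(y-g(x_n),F(x_n))=d(y,(F+g)(x_n))$, the triangle inequality, and the Lipschitz bound on $g$, together with Claim~(\ref{madoumier}); for (\ref{music1}) both arguments write the point as $(\bar x,\bar y+g(\bar x))+t\,(\text{direction})$, subtract off the difference quotient $[g(\bar x+t\,\cdot)-g(\bar x)]/t$, and invoke (\ref{Diff}) to land inside $B((u,v),\rho+\varepsilon)\subset B((u,v),\delta)$. The only cosmetic difference is that the paper first passes to $(x_n,y)$ via Claim~(\ref{rel1}) and works with an $n$-dependent scale $t_n$, whereas you fix the scale $t$ from $(x,y)$ and let $w_n=(x_n-\bar x)/t\to w$; both are valid. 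One small sharpening: since the product norm is the sum, $\Vert w-u\Vert+\Vert s-Dg(\bar x)(u)-v\Vert<\rho$ (not $2\rho$), so your hedge about strengthening the constraint on $\rho$ is unnecessary---the standing bound $\rho<\delta-\varepsilon$ already suffices.
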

We know that
$$\Vert  (x-\bar x, y-\bar y -g(\bar x))\Vert <\rho.$$
For $n$ sufficiently large, one has $\Vert x_n-x\Vert<\delta$ and
\begin{align*}\Vert (x_n-\bar x, y-g(x_n)- \bar y)\Vert& = \Vert x_n -\bar x \Vert + \Vert y - \bar y -g(x_n)\Vert \\&\leq \Vert  x_n - x\Vert +\Vert x-\bar x\Vert +\Vert y -\bar y -g(\bar x)\Vert +\Vert g(\bar x)-g(x_n)\Vert
\\&\leq  \Vert x_n-x\Vert +\Vert (x-\bar x,y-\bar y-g(\bar x))\Vert +\lambda \Vert x_n-x\Vert
\\&=(\lambda +1)\Vert x_n - x\Vert +2\rho
\\&< (\lambda + 3)\rho<\delta \quad (\text{since}\; \rho <\frac{\delta}{\lambda+3}).
\end{align*}
Thus, one obtains (\ref{music}).
Next, we have
\begin{equation} \label{rel2}
\begin{array}{ll}d(y-g(x_n),F(x_n))
&<\rho (\Vert(x_n,y-g(x_n))-(\bar x,\bar y)\Vert +\Vert g(x_n)-g(\bar x)\Vert)\\
&\le \rho (\Vert(x_n,y-g(x_n))-(\bar x,\bar y)\Vert +\lambda \Vert x_n-\bar x\Vert)\\
&\le \rho(\lambda +1)\Vert(x_n,y-g(x_n))-(\bar x,\bar y)\Vert \\&<2\delta\Vert(x_n,y-g(x_n))-(\bar x,\bar y)\Vert  \quad  (\text{since}\; \rho <\frac{\delta}{\lambda+2}).
\end{array}
\end{equation}
So, we receive (\ref{music2}).
By relation (\ref{rel1}), for $n$ sufficiently large, say, $n\ge n_0,$ as $(x_n,y_n)\ne (\bar x,\bar y+g(\bar x))$ we may  find  $t_n>0$ and  \\$(u_n,w_n)\in B( (u,v+Dg(\bar x)(u)),\rho)$ such that
$$ x_n=\bar x+t_nu_n,\quad y=\bar y+g(\bar x)+t_nw_n.$$
Set
$$y-g(x_n)=\bar y+t_nv_n;\;\; v_n=w_n-\frac{g(\bar x+t_nu_n)-g(\bar x)}{t_n}.$$
If $u=0$ then
$$\begin{array}{ll}\Vert (u_n,v_n)-(0,v)\Vert\\& = \Vert(u_n,( v_n-w_n+Dg(\bar x))+(w_n-v-Dg(\bar x)))\Vert\\&\leq \Vert (u_n w_n-v-Dg(\bar x))\Vert +\Vert w_n-v_n-Dg(\bar x)\Vert
\\&\le \rho +\Vert \frac{ g(\bar x +t_n u_n)-g(\bar x)}{t_n} -Dg(\bar x)\Vert
\\&<\rho +\varepsilon<\delta; \end{array}$$
otherwise, $u\not=0,$ since $u_n\in B(u,\rho),$ one has
$$t_n=\frac{\Vert x_n-\bar x\Vert}{\Vert u_n\Vert}\le \frac{\rho}{\Vert u\Vert -\rho}<\eta,$$
and therefore, by (\ref{Diff}),
$$ \begin{array}{ll}\Vert (u_n,v_n)-(u,v)\Vert&\le \Vert (u_n,w_n)-(u,v+Dg(\bar x)(u))\Vert+\Vert w_n-v_n-Dg(\bar x)(u)\Vert\\
&\le \rho+ \left\Vert \frac{g(\bar x+t_nu_n)-g(\bar x)}{t_n}-Dg(\bar x)(u)\right\Vert\le \rho+\varepsilon<\delta .\end{array}$$
Now, pick  $z_n\in F^{-1}(y-g(x_n))$ such that
$$ \Vert x_n-z_n\Vert\le (1+1/n) d(x_n, F^{-1}(y-g(x_n))).$$
Hence, according to (\ref{DMR-F}), (\ref{music}), (\ref{music2}), (\ref{music1}), we obtain,
\begin{equation}\label{TDK}
  \Vert x_n-z_n\Vert\le (1+1/n)\tau d(y-g(x_n),F(x_n)).
\end{equation}
Consequently,
\begin{align*}
\Vert z_n-\bar x\Vert&\leq \Vert x_n-z_n\Vert+\Vert x_n-\bar x\Vert
\\& \leq (1+1/n)\tau\rho\Vert(x_n,y)-(\bar x,\bar y+g(\bar x))\Vert +\Vert x_n-\bar x \Vert
\\ &<(1+1/n)\tau\rho^2+\rho<(1+1/n)\tau\rho+\rho\\&<
\ 2\tau \rho +\rho = \rho(1+2\tau)\\&<\delta.
\end{align*}
Using the local Lipschitz continuity around $\bar x$ of $g$ gives, $\Vert g(z_n)-g(x_n)\Vert\le \lambda \Vert z_n-x_n\Vert.$ As $y-g(x)\notin F(x)$ and $\lim_{n\to\infty}x_n=x,$ then $\liminf_{n\to\infty}\Vert x_n-z_n\Vert>0.$ Hence, by relation (\ref{TDK}), one has
$$ \begin{array}{ll}
|\Gamma\varphi_{F+g}(\cdot,y)|(x)&\ge \limsup_{n\to\infty}\frac{\varphi_{F+g}(x,y)-\varphi_{F+g}(z_n,y)}{\Vert x-z_n\Vert}\\
&\ge \limsup_{n\to\infty}\frac{d(y-g(x_n),F(x_n))-d(y-g(z_n),F(z_n))}{\Vert x_n-z_n\Vert}\\
&\ge  \limsup_{n\to\infty}\frac{d(y-g(x_n),F(x_n))-d(y-g(x_n),F(z_n))-\Vert g(x_n)-g(z_n)\Vert}{\Vert x_n-z_n\Vert}\\
&\ge \limsup_{n\to\infty}\frac{d(y-g(x_n),F(x_n))}{\Vert x_n-z_n\Vert}-\lambda \\
&\ge \limsup_{n\to\infty}\tau^{-1}\frac{n}{n+1}-\lambda= \tau^{-1}-\lambda.
\end{array} $$
Therefore,
$$\liminf_{
\begin{array}{ll}
 &(x,y)\underset {(u,v+Dg(\bar x)(u))} {\longrightarrow}(\bar x,\bar y+g(\bar x))
\\&x\notin (F+g)^{-1}(y), \frac{\varphi_{F+g}(x,y)}{\Vert (x,y)-(\bar x,\bar y)\Vert}\to 0
\end{array}
}
\vert \Gamma \varphi_{F+g}(\cdot,y)\vert (x)\geq \tau^{-1}-\lambda.
$$
Thanks to Theorem \ref{theo1}, the proof is complete.\hfill{$\Box$}\end{proof}
\section{Stability of  Directional H\"older Metric Regularity under Mixed Coderivative-Tangency Conditions  }
For the usual metric regularity, sufficient conditions in terms of   coderivatives have been given by various authors, for instance, in \cite  {Aze06,  JT1,  B.book1,  RefNT3}. In this section, we establish a characterization of  directional H\"{o}lder metric regularity  using the Fr\'{e}chet subdifferential in Asplund spaces,  i.e., in Banach spaces in which every convex continuous function is generically Fr\'{e}chet differentiable. There are  many equivalent descriptions of Asplund spaces, see, e.g., Mordukhovich's book  \cite  {B.book1}
and its bibliography. In particular, any reflexive space is Asplund, as well as each Banach space such that each of its separable subspaces has a separable dual. We shall show that the proposed characterization also ensures the stability of   directional H\"{o}lder metric regularity under suitable differentiable perturbations.
In this section, in order to formulate
 some coderivative characterizations of directional H\"{o}lder metric regularity, some additional
definitions are required.
Let $X$ be  a Banach space. Consider now an extended-real-valued function  $f:X\rightarrow\R\cup\{+\infty\}.$
The {\it Fr\'{e}chet subdifferential} of $f$ at ${\bar x}\in\dom f$
is given as
\begin{eqnarray*}
 \partial f(\bar{x}) = \left\{ x^\ast \in X^\ast: \liminf_{x \rightarrow \bar{x}, \quad x \neq \bar{x}} \frac{f(x) -
f(\bar{x}) - \langle  x^\ast, x - \bar{x} \rangle}{\| x - \bar{x} \|} \ge
0 \right\}.
\end{eqnarray*}
 For the convenience of the reader, we  would like to mention that  the terminology \textit{regular subdifferential}  instead of Fr\'echet subdifferential is also popular   due to its use in Rockafellar and Wets \cite  {Roc-Wet}.
Every element of the Fr\'echet subdifferential  is termed as a Fr\'echet (regular)
subgradient. If $\bar{x} $ is a point where $ f( \bar{x}) = \infty $,
then we set $ \partial f( \bar{x}) = \emptyset $. In fact one can show that an element $  x^\ast $ is a Fr\'echet  subgradient
of $ f $ at $ \bar{x} $ iff
$$
f(x) \ge f(\bar{x}) + \langle  x^\ast, x - \bar{x}\rangle + o (\| x - \bar{x}\|)\quad
\text{where}\quad \displaystyle \lim_{x \rightarrow \bar{x}} \frac{o(\| x - \bar{x}\|)}{\|x - \bar{x}\|}
= 0.$$
It is well-known that the Fr\'{e}chet subdifferential satisfies a fuzzy sum
rule in  Asplund spaces (see \cite  {B.book1}, Theorem 2.33). More precisely, if
$X$ is an Asplund space and $f_{1},f_{2}:X\rightarrow
\mathbb{R\cup\{\infty\}}$ are such that $f_{1}$ is Lipschitz continuous
around $\overline{x}\in\dom f_{1}\cap\dom %
f_{2}$ and $f_{2}$ is lower semicontinuous around $\overline{x},$
then for any $\gamma>0$ one has%
\begin{equation}
\partial(f_{1}+f_{2})(\overline{x})\subset%
{\displaystyle\bigcup}
\{\partial f_{1}(x_{1})+\partial f_{2}%
(x_{2})\mid x_{i}\in\overline{x}+\gamma\overline{B}_{X},\left\vert f
_{i}(x_{i})-f_{i}(\overline{x})\right\vert \leq\gamma,i=1,2\}+\gamma
B_{X^{\ast}}.\label{fuz}%
\end{equation}
For a nonempty closed set $C\subseteq X,$ denote by  $\delta_{C}$ the \textit{indicator function} associated with C
 (i.e. $\delta_{C}(x)=0$,  when $x\in C$ and  $\delta_{C}(x)=\infty$
otherwise). The {\textit{  Fr\'{e}chet normal cone} to $C$  at $\bar x$  is denoted by $N(C,\bar x)$. It  is a closed and convex object in $X^\ast$ which  is defined as
$\partial \delta_C(\bar x).$  Equivalently  a vector $ x^\ast \in X^\ast $ is a Fr\'echet normal to $ C $ at $ \bar{x}$ if
\begin{eqnarray*} \langle x^\ast , x - \bar{x}
\rangle \le o ( \| x - \bar{x} \|), \quad \forall x \in C,
\end{eqnarray*}
where $ \lim_{ x \rightarrow \bar{x}} \displaystyle\frac{o( \| x -
\bar{x} \|)}{\| x - \bar{x}\|} = 0 $.
Let $F:X\rightrightarrows Y$ be a set-valued map and $(x,y)\in\gph F.$ Then the {\it Fr\'{e}chet coderivative} at
$(x,y)$ is the set-valued map $D^{\ast
}F(x,y):Y^{\ast}\rightrightarrows X^{\ast}$ given by
\[
D^{\ast
}F(x,y)(y^{\ast}):=\big\{x^{\ast}\in
X^{\ast}\mid(x^{\ast},-y^{\ast})\in N(\gph%
F,(x,y))\big\}.
\]
\vskip 0.5cm
In the spirit of Gfrerer \cite  {Gfr}, see also Kruger \cite  {Kr15}, Ngai-Tinh \cite  {NT-MOR}  we introduce  the following  {\it limit set for directional H\"{o}lder metric regularity of order $\gamma$} ($\gamma\in (0,1]$).
\begin{definition}
Let  $F: X\rightrightarrows Y$  be   a closed multifunction and $(\bar
x,\bar y)\in\gph F$ be fixed. For  a given $(u,v)\in X\times Y$ and  each $\gamma\in (0,1],$ the {\it critical} limit set  for metric
regularity  of order $\gamma$ of $F$ in the  direction $(u,v)$ at $(\bar x,\bar y)\in \gph F$ is  denoted by
$\mbox{Cr}^\gamma F((\bar x,\bar y),(u,v))$ and  is defined as the set of all
$(w,x^*)\in Y\times X^*$ such that there exist sequences
$\{t_n\}\downarrow 0,$ $\{\varepsilon_n\}\downarrow 0,$ $(u_n,v_n)\in\cone B( (u,v),\varepsilon_n)$ with  $\Vert (u_n,v_n)\Vert=1,$
$w_n\in Y$, $y_n^*\in\mathcal{S}_{Y^*} (\text{the unit sphere of} \;Y^\ast),$\\
$x_n^*\in D^*F(\bar
x+t_nu_n,\bar y+t_nv_n +t_n^{1/\gamma}w_n)(y^*_n)$ with
 $$\frac{\langle
y_n^*,w_n\rangle}{\|w_n\|}\rightarrow 1\quad\mbox{and}\quad \left(w_n, t_n^{(\gamma-1)/\gamma}\Vert w_n\Vert^{\gamma-1}x_n^*\right)\rightarrow (w,x^*).$$
\end{definition}
For a closed multifunction $F:X\rightrightarrows Y,$ we define the {\it coderivative slope} of $F$ at $(x,y)\in\gph F$ as the following quantity:
$$ m_F(x,y)=\inf\left\{\Vert x^*\Vert:\;\; x^*\in D^*F(x,y)(y^*),\; y^*\in\mathcal{S}_Y^*\right\}.$$
\vskip 0.2cm
By applying Theorem \ref{Pertu}, we receive the following result.
\begin{theorem} \label{Coder-Charac} Let  $X$ and $Y$ be Asplund spaces and suppose given a closed multifunction $F:X\rightrightarrows Y$,  as well as  \\ $(\bar
x,\bar y)\in\gph F, (u,v)\in X\times Y$ and $\gamma\in ]0,1[$. If $(0,0)\notin \mbox{Cr}^\gamma F( (\bar x,\bar y),(u,v)),$ then  for any mapping $g:X\rightarrow Y$,  differentiable in a neighborhood of $\bar x,$ and  verifying for some $c\in ]0,1[,$ $\delta>0$
\begin{equation}\label{Compared-slope}
\Vert Dg(x)\Vert\le cm_F(x,y) \quad\forall (x,y)\in\gph F\cap B( (\bar x,\bar y),\delta)\cap \left((\bar x,\bar y)+\cone B( (u,v),\delta)\right),
\end{equation}
the multifunction $F+g$ is directionally H\"{o}lder metrically regular of order $\gamma$ at $(\bar x,\bar y+ g(\bar x))$ in the direction \\ $(u,v+Dg(\bar x)(u)).$ In  particular, $F$ is directionally H\"{o}lder metrically regular of order $\gamma$ at $(\bar x,\bar y)$ in the  direction $(u,v).$
\end{theorem}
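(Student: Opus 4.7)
The plan is to verify the sufficient slope condition \eqref{EvSNT!} of Theorem \ref{Slope-Chac} for $F+g$ at $(\bar x,\bar y+g(\bar x))$ in the direction $(u,v+Dg(\bar x)(u))$, and then invoke that theorem to conclude. I argue by contradiction: if \eqref{EvSNT!} failed for $F+g$, there would exist sequences $(x_n,y_n)\underset{(u,v+Dg(\bar x)(u))}{\rightarrow}(\bar x,\bar y+g(\bar x))$ with $\varphi_{F+g}(x_n,y_n)>0$, $\varphi_{F+g}^\gamma(x_n,y_n)/\|(x_n,y_n)-(\bar x,\bar y+g(\bar x))\|\to 0$, and $|\nabla\varphi_{F+g}^\gamma(\cdot,y_n)|(x_n)\to 0$. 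The target is to manufacture from these data a realization of $(0,0)\in\mbox{Cr}^\gamma F((\bar x,\bar y),(u,v))$, contradicting the standing hypothesis. Specialization to $g\equiv 0$ then yields the ``In particular'' statement.

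First, Fr\'echet subgradient extraction. Ekeland's variational principle applied to the lsc function $\varphi_{F+g}^\gamma(\cdot,y_n)$, with an Ekeland parameter $\lambda_n$ decaying faster than $|\nabla\varphi_{F+g}^\gamma(\cdot,y_n)|(x_n)$, furnishes $\tilde x_n$ with $\|\tilde x_n-x_n\|$ negligible compared to $\|(x_n,y_n)-(\bar x,\bar y+g(\bar x))\|$, satisfying $\varphi_{F+g}^\gamma(\tilde x_n,y_n)\le\varphi_{F+g}^\gamma(x_n,y_n)$, and a subgradient $\xi_n^*\in\partial\varphi_{F+g}^\gamma(\cdot,y_n)(\tilde x_n)$ with $\|\xi_n^*\|\le\lambda_n$. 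Since $\varphi_{F+g}(\tilde x_n,y_n)>0$, the chain rule for the smooth strictly increasing function $t\mapsto t^\gamma$ rewrites this as $\xi_n^*=\gamma\varphi_{F+g}^{\gamma-1}(\tilde x_n,y_n)\,\eta_n^*$ for some $\eta_n^*\in\partial\varphi_{F+g}(\cdot,y_n)(\tilde x_n)$.

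Next, coderivative extraction. The Asplund hypothesis permits applying the fuzzy sum rule to $x\mapsto d(y_n,(F+g)(x))$ at $\tilde x_n$, yielding, modulo an arbitrarily small error, a near-projection $\hat y_n\in(F+g)(\tilde x_n)$, a norming dual $y_n^*\in S_{Y^*}$ for $y_n-\hat y_n$, and $\zeta_n^*\in D^*(F+g)(\tilde x_n,\hat y_n)(y_n^*)$ close to $\eta_n^*$. Decomposing by the coderivative sum rule for a multifunction plus a Fr\'echet-smooth map, $\zeta_n^*=x_n^*+(Dg(\tilde x_n))^*y_n^*$ with $x_n^*\in D^*F(\tilde x_n,\hat y_n-g(\tilde x_n))(y_n^*)$. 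The compatibility hypothesis \eqref{Compared-slope} is now the essential lever: since $(\tilde x_n,\hat y_n-g(\tilde x_n))\in\gph F$ lies in the relevant cone-neighborhood of $(\bar x,\bar y)$, $\|Dg(\tilde x_n)\|\le c\,m_F(\tilde x_n,\hat y_n-g(\tilde x_n))\le c\|x_n^*\|$. The triangle inequality then gives $(1-c)\|x_n^*\|\le\|\eta_n^*\|+o(1)$, so $\|x_n^*\|$ decays at the rate of $\|\eta_n^*\|=\|\xi_n^*\|\varphi_{F+g}^{1-\gamma}(\tilde x_n,y_n)/\gamma$.

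Finally, matching scales with the definition of $\mbox{Cr}^\gamma F$. Assuming $u\ne 0$ (the case $u=0$ is handled analogously), choose $t_n>0$ and $(u_n,v_n)$ of unit norm in $X\times Y$ close to $(u,v)/\|(u,v)\|$ so that $\bar x+t_nu_n=\tilde x_n$, and set $w_n:=(\hat y_n-g(\tilde x_n)-\bar y-t_nv_n)/t_n^{1/\gamma}$. The directional convergence of $(x_n,y_n)$, the Hadamard expansion $g(\tilde x_n)-g(\bar x)=Dg(\bar x)(\tilde x_n-\bar x)+o(\|\tilde x_n-\bar x\|)$, and the negligibility of $\hat y_n-y_n$ force $(u_n,v_n)\in\cone B((u,v),\varepsilon_n)$ with $\varepsilon_n\to 0$; the slope-failure assumption $\varphi_{F+g}^\gamma(x_n,y_n)=o(t_n)$ drives $w_n\to 0$; and $\langle y_n^*,w_n\rangle/\|w_n\|\to 1$ follows from $y_n^*$ being a near-maximizer for $\|y_n-\hat y_n\|$. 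If the Ekeland parameter is chosen with $\lambda_n\ll t_n^{(1-\gamma)/\gamma}\|w_n\|^{1-\gamma}\varphi_{F+g}^{\gamma-1}(\tilde x_n,y_n)$, then $t_n^{(\gamma-1)/\gamma}\|w_n\|^{\gamma-1}\|x_n^*\|\to 0$, placing $(0,0)$ into $\mbox{Cr}^\gamma F((\bar x,\bar y),(u,v))$, the desired contradiction. The principal technical obstacle is precisely this final calibration: the chain-rule factor $\varphi_{F+g}^{\gamma-1}$, which blows up as $\varphi_{F+g}\downarrow 0$, must be defeated by an Ekeland decay $\lambda_n$ coordinated with the rescaling $t_n^{(\gamma-1)/\gamma}\|w_n\|^{\gamma-1}$; the gauge inequality $d(y,(F+g)(x))\le\delta\|(x,y)-(\bar x,\bar y+g(\bar x))\|^{1/\gamma}$ built into directional H\"older regularity is what makes this balance work.
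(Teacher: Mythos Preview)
Your outline is the paper's argument: contradict the local-slope criterion of Theorem~\ref{Slope-Chac} for $F+g$, extract coderivative data by Ekeland plus the fuzzy sum rule, use \eqref{Compared-slope} to pass from $D^*(F+g)$ to $D^*F$, and rescale to force $(0,0)\in\mbox{Cr}^\gamma F((\bar x,\bar y),(u,v))$. The final cancellation you flag as the ``principal technical obstacle'' in fact resolves cleanly, since $t_n^{1/\gamma}\|w_n\|\approx\varphi_{F+g}(\tilde x_n,y_n)$ makes $t_n^{(\gamma-1)/\gamma}\|w_n\|^{\gamma-1}$ absorb the chain-rule factor $\varphi_{F+g}^{1-\gamma}$ exactly.

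There is, however, one genuine imprecision in your ``coderivative extraction'' step. What you hold after the chain rule is $\eta_n^*\in\partial\varphi_{F+g}(\cdot,y_n)(\tilde x_n)$, a subgradient of the \emph{lower semicontinuous envelope}, not of $x\mapsto d(y_n,(F+g)(x))$; moreover $\tilde x_n$ need not lie in $\dom(F+g)$, so there may be no ``near-projection $\hat y_n\in(F+g)(\tilde x_n)$'' at all. The paper avoids this by lifting to the graph from the outset: starting from the local $\delta_n$-minimality of $\varphi_{F+g}^\gamma(\cdot,y_n)$ at $x_n$, it picks a near-realizer $(z_n,w_n)\in\gph(F+g)$ and applies Ekeland to $(z,w)\mapsto\|y_n-w\|^\gamma+\delta_{\gph(F+g)}(z,w)+\delta_n\|z-z_n\|$ on $\overline B(x_n,\eta_n)\times Y$, then the fuzzy sum rule, which directly produces a nearby graph point $(z_n^3,w_n^3)\in\gph(F+g)$ together with $(z_n^{3*},-w_n^{3*})\in N(\gph(F+g),(z_n^3,w_n^3))$ and a norming $y_n^*$. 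Once you insert this graph-space step in place of your subdifferentiation of $d(y_n,(F+g)(\cdot))$, the two arguments coincide line by line, including the verification that the resulting $(u_n,v_n)$ lie in $\cone B((u,v),\varepsilon_n)$ via the expansion $g(\bar x+t_nu_n)-g(\bar x)=t_nDg(\bar x)(u_n)+o(t_n)$.
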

\vskip 0.2cm\begin{proof}  gAssume  on the contrary that there is a differentiable function $g:X\rightarrow Y,$ verifying (\ref{Compared-slope}) for some $c\in (0,1)$ and $\delta>0,$ such that  H\"{o}lder metric regularity  of order $\gamma$ at $(\bar x, \bar y+g(\bar x))$ in  the direction $(u,v+Dg(\bar x)(u))$ fails for $F+g$.  According to Theorem \ref{Slope-Chac},   setting $$\varphi(x,y)=\liminf_{u\to x}d(y,F(u)+g(u))=\liminf_{u\to x}d(y-g(x),F(u)),\, (x,y)\in X\times Y,$$
we can find a sequence $\{(x_n,y_n)\}\subseteq X\times Y$ and  a sequence of nonnegative reals $\{\delta_n>0\}$ (we can set $\delta_n= \frac{1}{n}$) such that$$ \varphi(x_n,y_n)>0,\; \delta_n\downarrow 0,\; \Vert x_n- \bar x \Vert<\delta_n,\;  \Vert y_n- \bar y-g(\bar x) \Vert<\delta_n;$$
\begin{equation}\label{HLY}
(x_n,y_n)\in(\bar x.\bar y+g(\bar x))+ \cone B( (u,v+Dg(\bar x)(u)),\delta_n);\end{equation}
$$\lim_{n\to\infty}\frac{\varphi^{\gamma}(x_n,y_n)}{\|(x_n,y_n)-(\bar x,\bar y+g(\bar x))\|}=0,$$
and
$$|\nabla \varphi^\gamma(\cdot,y_n)|(x_n)<\delta_n,\\;\;\forall n\in\N.$$
We can assume that $\delta_n\in (0,\delta/4)$ for all $n\in\N.$ Then,  for each $n\in\N,$ there is
$\eta_n\in (0,\delta_n)$ with
 \begin{equation}\label{para-eta}
\eta_n/\varphi(x_n,y_n)\to 0;\; 2\eta_n<\varphi(x_n,y_n)(1-\delta_n);\;
 \eta_n^2/4+5\eta_n/4<\Vert (x_n,y_n)-(\bar x,\bar y+g(\bar x))\Vert\;\;\forall n\in\N
\end{equation}
 such
that $$d(y_n-g(z),F(z))\geq \varphi(x_n,y_n)(1-\delta_n),\,\forall z\in
B(x_n,4\eta_n),$$  and
$$\delta_n\geq \frac{\varphi^\gamma(x_n,y_n)-\varphi^\gamma(z,y_n)}{\|x_n-z\|}\quad \mbox{for all}\; z\in \overline{ B}(x_n,\eta_n).$$
Equivalently,
$$\varphi^{\gamma}(x_n,y_n)\leq \varphi^\gamma(z,y_n)+\delta_n\|z-x_n\|\quad \mbox{for all}\; z\in \overline{B}(x_n,\eta_n).$$
Since $$\eta_n/\varphi(x_n,y_n)\to 0\quad\mbox{and}\quad \lim_{n\to\infty}\frac{\varphi^{\gamma}(x_n,y_n)}{\|(x_n,y_n)-(\bar x,\bar y+g(\bar x))\|}=0 ,$$ one has
\begin{equation}\label{lim-eta}
\lim_{n\to\infty}\frac{\eta_n^{\gamma}}{\|(x_n,y_n)-(\bar x,\bar y+g(\bar x))\|}=0.
\end{equation}
Take $z_n\in B(x_n,\eta_n^2/4),$ $w_n\in F(z_n)+g(z_n)$ such that
$$\|y_n-w_n\|^\gamma\leq
\varphi(x_n,y_n)^\gamma+\eta_n^2/4.$$
Then,
$$\| y_n-w_n\|^\gamma\leq  \varphi^\gamma(z,y_n)+\delta_n\|z-x_n\|+\eta_n^{2}/4, \;\;\forall z\in \overline{ B}(x_n,\eta_n).$$
By
$$\varphi^\gamma(z,y_n)\le [d(y_n,(F+g)(z))]^\gamma \le \|y_n-w\|^\gamma+\delta_{\mbox{gph}(F+g)}(z,w), \;\;\forall w\in Y.$$
one has
$$\| y_n-w_n\|^\gamma \leq
 \|y_n-w\|^\gamma+\delta_{\mbox{gph}(F+g)}(z,w)+\delta_n\|z-z_n\|+(\delta_n+1)\eta_n^{2}/4$$
for all  $(z,w)\in \overline{B}(x_n,\eta_n)\times Y.$
\vskip 2mm
Applying the
Ekeland variational principle to the function
$$(z,w)\mapsto \| y_n-w\|^\gamma+\delta_{\mbox{gph}(F+g)}(z,w)+\delta_n\|z-z_n\|$$
on $\overline B(x_n,\eta_n)\times Y,$ we can select $(z^1_n,w^1_n)\in
(z_n,w_n)+\frac{\eta_n}{4} B_{X\times Y}$ with $(z^1_n,w^1_n)\in \mbox{gph}(F+g)$
such that \begin{equation}\label{Estim 1}|y_n-w^1_n\|^\gamma+\delta_n\Vert z_n^1-z_n\Vert\leq\|y_n-w_n\|^\gamma (\leq
\varphi^\gamma(x_n,y_n)+\eta_n^{2}/4);\end{equation} and such  that the function
$$(z,w)\mapsto \|y_n-w\|^\gamma+\delta_{\mbox{gph}(F+g)}(z,w)+\delta_n\|z-z_n\|+(\delta_n+1)\eta_n\|(z,w)-(z_n^1,w_n^1)\|$$
attains a minimum on $\overline{B}(x_n,\eta_n)\times Y$ at $(z_n^1,w_n^1).$ Observing that the functions
$$(z,w)\to \|y_n-w\|^\gamma, (z,w)\to\|z-z_n\| \;\text{and}\; (z,w)\to  \|(z,w)-(z_n^1,w_n^1)\|$$ are locally Lipschitz around $(z^1_n,w^1_n),$ thanks to the fuzzy sum rule, we can select
 $$w_n^2\in B_Y(w_n^1,\eta_n);\;(z_n^3,w_n^3)\in
B_{X\times
Y}((z_n^1,w_n^1),\eta_n)\cap\mbox{gph}(F+g);$$
$$w_n^{2*}\in\partial( \|y_n-\cdot\|^\gamma)(w_n^2);\;(z_n^{3*},-w_n^{3*})\in
N(\mbox{gph}(F+g),(z_n^3,w_n^3))$$
satisfying
$$w_n^{3*}=w_n^{2*}+(\delta_n+2)\eta_nw_n^{4*},$$
\begin{equation}\label{Estim 2}
\|w_n^{2*}-w_n^{3*}\|<(\delta_n+2)\eta_n, \end{equation}
 and,
$$ \|z_n^{3*}\|\leq \delta_n +(\delta_n+2)\eta_n.$$
Since  $\|y_n-w_n^2\|\geq
\|y_n-w_n\|-\|w_n^2-w_n\|\geq \varphi(x_n,y_n)(1-\delta_n)-2\eta_n>0$, then
$$w_n^{2*}\in\partial( \| y_n-\cdot\|^\gamma)(w_n^2)=\gamma\|y_n-w_n^2\|^{\gamma-1}\partial(\|y_n-\cdot\|)(w_n^2),$$ and therefore,
 $$w_n^{2*}\\=\gamma\|y_n-w_n^2\|^{\gamma-1}t_n^{2*}$$
with
$\|t_n^{2*}\|=1$ and $\langle t_n^{2*},w_n^2-y_n\rangle=\|y_n-w_n^2\|.$ Thus, from  (\ref{Estim 2}),  it follows that
$$\begin{array}{ll}\|w_n^{3*}\|&\geq \|w_n^{2*}\|-(\delta_n+2)\eta_n
\\&=\gamma\|y_n-w_n^2\|^{\gamma-1}\Vert t_n^{2*}\Vert-(\delta_n+2)\eta_n
\\&=\gamma\|y_n-w_n^2\|^{\gamma-1}-(\delta_n+2)\eta_n>0,\end{array}$$
$$\begin{array}{ll} \|w_n^{3*}\|&\le
 \|w_n^{2*}\|+(\delta_n+2)\eta_n
 \\&=\gamma\|y_n-w_n^2\|^{\gamma-1}\Vert t_n^{2*}\Vert+(\delta_n+2)\eta_n
 \\&=\gamma\|y_n-w_n^2\|^{\gamma-1}+(\delta_n+2)\eta_n.\end{array}$$
Since
$$\begin{array}{ll} t_n&\ge \Vert (x_n,y_n)-(\bar x,\bar y+g(\bar x))\Vert- \Vert z_n^3-x_n\Vert\\&\ge \Vert (x_n,y_n)-(\bar x,\bar y
+g(\bar x))\Vert-\eta_n^2/4-5\eta_n/4>0,\end{array}$$
it makes sense to set
\begin{equation}\label{tron}
t_n=\|(z_n^3,y_n)-(\bar x,\bar y+g(\bar x))\|;\;\; (u_n,v_n)=(z_n^3-\bar x,y_n-\bar y-g(\bar x))/t_n;\;\; \zeta_n=(w_n^3-y_n)/t_n^\frac{1}{\gamma},
\end{equation}
and
$$y_n^*=w_n^{3*}/\|w_n^{3*}\|;\;\;x_n^*=z_n^{3*}/\|w_n^{3*}\|.$$
As $(z_n^3,y_n)\to (\bar x,\bar y+g(\bar x))$ and $(z_n^3,y_n)\not= (\bar x,\bar y+g(\bar x))$ for $n$ sufficiently large, then $(t_n)\downarrow 0$ as $n \to\infty.$
Since
$$\begin{array}{ll}\varphi(x_n,y_n)(1-\delta_n )&\leq  d(y_n,(F+g)(\bar x+t_nu_n))\leq t_n^\frac{1}{\gamma}\|\zeta_n\|
\\&\leq
\|y_n-w_n^1\|+\eta_n\\&\leq \varphi(x_n,y_n)+\eta_n^2/4+\eta_n,
\end{array}$$
one has
$$\|\zeta_n\|\le \frac{\varphi(x_n,y_n)+\eta_n^2/4+\eta_n}{(\|(x_n,y_n)-(\bar x,\bar y+g(\bar x))\|-\eta_n^2/4-5\eta_n/4)^{1/\gamma}}.$$
As $\varphi^{\gamma}(x_n,y_n)/\|(x_n,y_n)-(\bar x,\bar y+g(\bar x))\|\to 0$ as well as $\eta^{\gamma}_n/\Vert (x_n,y_n)-(\bar x,\bar y+g(\bar x))\Vert\to 0,$ one obtains
\begin{equation}\label{v}
\lim_{n\to\infty}\zeta_n=0.
\end{equation}
Next, by using the standard formula for the Fr\'{e}chet coderivative of $F+g$, one has
\begin{equation}
\begin{array}{ll}
x_n^*&\in D^*(F+g)(\bar x+t_nu_n,\bar y+g(\bar x)+t_nv_n+t_n^\frac{1}{\gamma}\zeta_n)(y_n^*)\\
&=D^*F(\bar x+t_nu_n,\bar y+g(\bar x)-g(\bar x+t_nu_n)+t_nv_n+t_n^\frac{1}{\gamma}\zeta_n)(y_n^*)+Dg^*(\bar x+t_nu_n)y_n^*.\notag
\end{array}\end{equation}
 In  other words,
\begin{equation}\label{coder}
z_n^*:=x^*_n-Dg^*(\bar x+t_nu_n)y_n^*\in D^*F(\bar x+t_nu_n,\bar y+g(\bar x)-g(\bar x+t_nu_n)+t_nv_n+t_n^\frac{1}{\gamma}\zeta_n)(y_n^*).
\end{equation}
From (\ref{tron}) and relations
$$(\ref{HLY}),\;\lim_{n\to\infty}\zeta_n=0; g(\bar x+t_nu_n)-g(\bar x)=t_nDg(\bar x)(u_n)+0(t_n), $$

by checking directly, we derive that
\begin{equation}\label{cone}
\left(u_n,\frac{g(\bar x)-g(\bar x+t_nu_n)}{t_n}+v_n \right) \in \cone B( (u,v),\varepsilon_n),
\end{equation}
and
$$ (t_nu_n,g(\bar x)-g(\bar x+t_nu_n)+t_nv_n+t_n^\frac{1}{\gamma}\zeta_n) \in \cone B( (u,v),\varepsilon_n),$$
for some sequence $\{\varepsilon_n\}\downarrow 0.$\\
Therefore, by (\ref{Compared-slope}), one has (for $n$ sufficiently large)
$$ \Vert x_n^*\Vert=\Vert z_n^*+ Dg^*(\bar x+t_nu_n)y_n^*\Vert\ge \Vert z_n^*\Vert-\Vert Dg^*(\bar x+t_nu_n)\Vert\ge (1-c)\Vert z_n^*\Vert.$$
By (\ref{HLY}), one derives that
\begin{equation}\label{Estim3}
\begin{array}{ll}
t_n^{(\gamma-1)/\gamma}\Vert \zeta_n\Vert^{\gamma-1}\|x_n^*\|&=t_n^{(\gamma-1)/\gamma}\Vert \zeta\Vert^{\gamma-1}\|z^{3*}_n\|/\|w^{3*}_n\|\\
&\leq \frac{t_n^{(\gamma-1)/\gamma}\Vert \zeta_n\Vert^{\gamma-1}(\delta_n
+(\delta_n+2)\eta_n)}{\gamma\|y_n-w_n^2\|^{\gamma-1}-(\delta_n+2)\eta_n}\\
&\leq \frac{\|y_n-w_n^3\|^{\gamma-1}(\delta_n
+(\delta_n+2)\eta_n)}{\gamma(\|y_n-w_n^2\|-2\eta_n)^{\gamma-1}-(\delta_n+2)\eta_n}.
\end{array}
\end{equation}
Observing that
$$ \Vert y_n-w_n^2\Vert\ge \Vert y_n-w_n\Vert-\Vert w_n-w_n^2\Vert\ge \varphi(x_n,y_n)-\delta_n-2\eta_n>0,$$
then,
$$ \lim_{n\to\infty}\frac{\eta_n}{\Vert y_n-w_n^2\Vert}\le \lim_{n\to\infty}\frac{\eta_n}{\varphi(x_n,y_n)-\delta_n-2\eta_n}=0. $$

Therefore, (\ref{Estim3}) yields
\begin{equation}\label{limz}
\lim_{n\to\infty}t_n^{(\gamma-1)/\gamma}\Vert \zeta_n\Vert^{\gamma-1}\|z_n^*\|\le\lim_{n\to\infty} (1-c)^{-1} t_n^{(\gamma-1)/\gamma}\Vert \zeta\Vert^{\gamma-1}\|x_n^*\|=0.
\end{equation}
Next, one has
\begin{equation}\notag
\begin{array}{ll}
\displaystyle\frac{\langle y_n^*,\zeta_n\rangle}{\Vert \zeta_n\Vert}&=\displaystyle\frac{\langle y_n^*,w_n^3-y_n\rangle}{\Vert w_n^3-y_n\Vert}\\
&\ge \displaystyle\frac{\langle w_n^{3*},w^3_n-y_n\rangle}{\Vert w_n^{3*}\Vert\Vert w^3_n-y_n\Vert}\\
&\ge \displaystyle\frac{\langle w_n^{2*},w^2_n-y_n\rangle-\Vert w_n^{2*}-w_n^{3*}\Vert\Vert w^2_n-y_n\Vert-\Vert w_n^{2*}\Vert\Vert w^2_n-w^3\Vert}{\Vert w_n^{2*}\Vert\Vert w^2_n-y_n\Vert +\Vert w_n^{2*}\Vert\|w^3_n-w^2_n\|+\|w^3_n-w^2_n\|\Vert w_n^{2*}-w_n^{3*}\Vert+\Vert w_n^{2*}-w_n^{3*}\Vert\Vert w^2_n-y_n\Vert}\\
&\ge\displaystyle \frac{\gamma\Vert w^2_n-y_n\Vert^\gamma -(\delta_n+2)\eta_n(\varphi(x_n,y_n)+\delta_n+2\eta_n)-2\eta_n\gamma\Vert y_n-w_n^2\Vert^{\gamma-1}}{\gamma\Vert w^2_n-y_n\Vert^\gamma+2\eta_n\gamma\Vert y_n-w_n^2\Vert^{\gamma-1}+2\eta^2_n(\delta_n+2)+(\delta_n+2)\eta_n(\varphi(x_n,y_n)+\delta_n+2\eta_n)}.
\end{array}
\end{equation}
Here, we use the following relations
$$\langle w_n^{2*},w^2_n-y_n\rangle= \gamma\Vert w^2_n-y_n\Vert^\gamma;\;\Vert w_n^{2*}\Vert=\gamma\Vert w^2_n-y_n\Vert^{(\gamma-1)}$$
$$ \Vert y_n-w_n^2\Vert\ge \Vert y_n-w_n\Vert-\Vert w_n-w_n^2\Vert\ge \varphi(x_n,y_n)(1-\delta_n)-2\eta_n>0.$$
Hence, since
$$\frac{\eta_n}{\Vert y_n-w_n^2\Vert}\le \frac{\eta_n}{\varphi(x_n,y_n)(1-\delta_n)-2\eta_n}\rightarrow 0,\;\mbox{as}\; n\to\infty,$$
one obtains
\begin{equation}\label{final}
\lim_{n\to\infty}\frac{\langle y_n^*,\zeta_n\rangle}{\Vert \zeta_n\Vert}=1.
\end{equation}
Relations (\ref{v}), (\ref{coder}), (\ref{cone}), (\ref{limz}), (\ref{final}) show that $(0,0)\in \mbox{Cr}^\gamma F( (\bar x,\bar y),(u,v)),$ completing  the proof.\hfill{$\Box$}\end{proof}
\vskip 0.5cm
An open question is whether the inverse implication is true  in general? In  the particular case when  $F$ is a closed multifunction and either $F$ is a closed convex multifunction (i.e., $\gph F$ is closed convex) or $\gamma=1,$ the inverse holds as shown in the following proposition.
\vskip 0.5cm
\begin{proposition} Let $X,Y$ be Banach spaces and let $(u,v)\in X\times Y,$ $\gamma\in (0,1]$ be given.
\begin{itemize}
\item[(i)] If $F:X\rightrightarrows Y$ is a closed convex multifunction, which is directionally H\"{o}lder metrically regular of order $\gamma$ at $(\bar x,\bar y)\in\gph F$ in the direction $(u,v),$ then $(0,0)\notin\mbox{Cr}^\gamma F( (x,y),(u,v));$
\item[(ii)] If a closed multifunction $F:X\rightrightarrows Y$ is directionally metrically regular at $(\bar x,\bar y)\in\gph F$ in  the direction $(u,v),$ then $(0,0)\notin\mbox{Cr}F( (x,y),(u,v)).$

\end{itemize}
\end{proposition}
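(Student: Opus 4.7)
The plan is to argue both parts by contradiction via a common template. Assume the zero pair lies in the critical set in question. Unfolding the definition of $\mbox{Cr}^\gamma F((\bar x,\bar y),(u,v))$ (with $\gamma=1$ in~(ii)), one extracts sequences $\{t_n\}\downarrow 0$, $\{\varepsilon_n\}\downarrow 0$, $(u_n,v_n)\in\cone B((u,v),\varepsilon_n)$ with $\|(u_n,v_n)\|=1$, $w_n\in Y$, $y_n^{*}\in\mathcal{S}_{Y^{*}}$, and $x_n^{*}\in D^{*}F(a_n,b_n)(y_n^{*})$, where
\[
a_n:=\bar x+t_nu_n,\qquad b_n:=\bar y+t_nv_n+t_n^{1/\gamma}w_n\in F(a_n),
\]
such that $\langle y_n^{*},w_n\rangle/\|w_n\|\to 1$ and $\lambda_n x_n^{*}\to 0$, where $\lambda_n:=t_n^{(\gamma-1)/\gamma}\|w_n\|^{\gamma-1}$ (so $\lambda_n\equiv 1$ in~(ii)). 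In both cases I would apply directional H\"older/Lipschitz regularity to a well-chosen pair $(a_n,y_n')$ near $(a_n,b_n)$, pick $\tilde x_n\in F^{-1}(y_n')$ close to $a_n$, and then feed $(\tilde x_n,y_n')$ into the normality inequality carried by $(x_n^{*},-y_n^{*})$ to produce a strictly positive lower bound on $\lambda_n\|x_n^{*}\|$, contradicting the hypothesis.

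For part~(i), convexity of $\gph F$ forces the Fr\'echet normal cone to coincide with the classical convex normal cone, so
\[
\langle x_n^{*},x-a_n\rangle\le\langle y_n^{*},y-b_n\rangle\qquad\forall (x,y)\in\gph F.
\]
I would take $y_n':=\bar y+t_nv_n$; since $b_n\in F(a_n)$, $d(y_n',F(a_n))\le\|y_n'-b_n\|=t_n^{1/\gamma}\|w_n\|$, and using $\|(u_n,v_n)\|=1$ and $w_n\to 0$ one checks easily that the neighborhood, directional and gauge conditions of Lemma~\ref{Tron} are met for large $n$. Directional $\gamma$-regularity then delivers $\tilde x_n\in F^{-1}(y_n')$ with $\|\tilde x_n-a_n\|\le 2\tau t_n\|w_n\|^{\gamma}$, and the convex inequality applied at $(\tilde x_n,y_n')$ reads
\[
t_n^{1/\gamma}\langle y_n^{*},w_n\rangle\le 2\tau t_n\|w_n\|^{\gamma}\|x_n^{*}\|.
\]
Dividing by $t_n\|w_n\|^{\gamma}$ and using $\langle y_n^{*},w_n\rangle/\|w_n\|\to 1$ produces $\lambda_n\|x_n^{*}\|\ge 1/(4\tau)$ for large $n$, contradicting $\lambda_n x_n^{*}\to 0$.

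For part~(ii) ($\gamma=1$, $\lambda_n\equiv 1$), the same template is available, but the global convex inequality is replaced by the asymptotic statement that for every $\varepsilon>0$ there is a neighborhood of $(a_n,b_n)$ on which
\[
\langle x_n^{*},x-a_n\rangle-\langle y_n^{*},y-b_n\rangle\le\varepsilon\|(x,y)-(a_n,b_n)\|,\qquad (x,y)\in\gph F.
\]
I would fix once and for all $\varepsilon\in(0,1/(2\tau+1))$, introduce a scaling parameter $s_n\in(0,1)$, and take $y_n':=b_n-s_nt_nw_n$. Directional metric regularity still yields $\tilde x_n\in F^{-1}(y_n')$ with $\|\tilde x_n-a_n\|\le 2\tau s_nt_n\|w_n\|$, and one chooses $s_n$ so small that the $\varepsilon$-inequality above holds at $(\tilde x_n,y_n')$. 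Substituting and dividing by $s_nt_n\|w_n\|$ yield
\[
\frac{\langle y_n^{*},w_n\rangle}{\|w_n\|}\le 2\tau\|x_n^{*}\|+\varepsilon(2\tau+1);
\]
letting $n\to\infty$ forces $1\le\varepsilon(2\tau+1)<1$, the desired contradiction.

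The main obstacle is precisely the non-convex case~(ii): Fr\'echet normality is only asymptotic, so no fixed test pair at macroscopic distance $\sim t_n\|w_n\|$ from $(a_n,b_n)$ is guaranteed to satisfy the $\varepsilon$-inequality. The role of the scaling $s_n$ is to push the trial perturbation arbitrarily deep into the current Fr\'echet $\varepsilon$-radius at $(a_n,b_n)$, while every quantity in the argument is positively homogeneous in $s_n$, so the parameter cancels out after division. Verifying that $(a_n,y_n')$ still obeys the neighborhood, directional-cone and gauge conditions of Lemma~\ref{Tron} reduces to the observations $\|(a_n,y_n')-(\bar x,\bar y)\|=t_n(1+o(1))$ and $s_n\|w_n\|\to 0$, which is routine.
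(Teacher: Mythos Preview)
Your argument is correct and follows essentially the same route as the paper: in (i) you exploit that convexity turns the Fr\'echet normal into a global linear inequality and test it at a point of $F^{-1}(\bar y+t_nv_n)$ produced by directional $\gamma$-regularity, and in (ii) you handle the merely asymptotic Fr\'echet inequality by scaling the perturbation along $w_n$ so that the test pair falls inside the relevant neighborhood. The only cosmetic difference is that the paper takes the Fr\'echet error level to be the vanishing sequence $\varepsilon_n$ (with associated radii $\delta_n\in\,]0,t_n[$ playing the role of your $s_nt_n$), whereas you fix a single $\varepsilon<1/(2\tau+1)$; both choices lead to the same homogeneous estimate after dividing out the scaling parameter.
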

\vskip 0.2cm
\begin{proof} ({\it i}). Let  us  consider sequences  $\{t_n\}\downarrow 0;$ $\{\varepsilon_n\}\downarrow 0;$ $(u_n,v_n)\in\cone B( (u,v),\varepsilon_n)$ with $\Vert (u_n,v_n)\Vert=1;$ $\zeta_n\in Y;$ $y_n^*\in\mathcal{S}_{Y^*}$ and $x_n^*\in X^*$ such that
$$ \Vert \zeta_n\Vert\to 0;\;\; \frac{\langle y_n^*,\zeta_n\rangle}{\Vert \zeta_n\Vert}\to 1\quad\mbox{as}\; n\to\infty, $$
$$ x_n^*\in D^*F(\bar x+t_nu_n,\bar y+t_nv_n+t_n^{1/\gamma}\zeta_n)(y_n^*).$$
Thanks to the convexity of $F,$
\begin{equation} \label{Convex}
\langle x_n^*,x-\bar x-t_nu_n\rangle- \langle y_n^*,y-\bar y-t_nv_n-t_n^{1/\gamma}\zeta_n\rangle\le 0\;\forall (x,y)\in\gph F.
\end{equation}
Since $F$ is directionally H\"{o}lder metrically regular of order $\gamma$ at $(\bar x,\bar y)\in\gph F$ in  the direction $(u,v),$ with some constant $\tau>0,$ and,
$$ \frac{d(\bar y+t_nv_n, F(\bar x+t_nu_n))}{(t_n\Vert(u_n,v_n)\Vert)^{1/\gamma}}\le \Vert \zeta_n\Vert\rightarrow 0,$$
 for  $n$ sufficiently large, one has
$$ \begin{array}{ll}d(\bar x+t_nu_n, F^{-1}(\bar y+t_nv_n))&\le\tau d(\bar y+t_nv_n, F(\bar x+t_nu_n))^\gamma\\
&\le \tau\Vert \bar y+t_nv_n-\bar y-t_nv_n-t^{1/\gamma}\zeta_n\Vert^\gamma=\tau t_n\Vert \zeta_n\Vert^\gamma.
\end{array} $$
Thus, we can find $x_n\in F^{-1}(\bar y+t_nv_n)$ such that
$$ \Vert x_n-\bar x-t_nu_n\Vert\le \tau(1+t_n) t_n\Vert \zeta_n\Vert^\gamma.$$
Taking into account  (\ref{Convex}), one obtains
$$ \Vert x_n^*\Vert \tau(1+t_n) t_n\Vert \zeta_n\Vert^\gamma\ge \Vert x_n^*\Vert\Vert x_n-\bar x-t_nu_n\Vert\ge\langle x_n^*,\bar x+t_nu_n-x_n\rangle \ge t_n^{1/\gamma}\langle y_n^*,\zeta_n\rangle. $$
Therefore,
$$ \liminf_{n\to\infty} \Vert x_n^*\Vert t_n^{(\gamma-1)/\gamma}\Vert \zeta_n\Vert^{\gamma-1}\ge\lim_{n\to\infty} \tau^{-1}(1+t_n)^{-1}\frac{\langle y_n^*,\zeta_n\rangle}{\Vert \zeta_n\Vert}=\tau^{-1},$$
which shows that $(0,0)\notin\mbox{Cr}^\gamma F( (\bar x,\bar y),(u,v)).$
\vskip 0.2cm
({\it ii}). Suppose that $F$ is  directionally  metrically regular  at $(\bar x,\bar y)\in\gph F$ in  the direction $(u,v).$ Let be given   sequences $\{t_n\}\downarrow 0;$ $\{\varepsilon_n\}\downarrow 0;$ $(u_n,v_n)\in\cone B( (u,v),\varepsilon_n)$ with $\Vert (u_n,v_n)\Vert=1;$ $\zeta_n\in Y;$ $y_n^*\in\mathcal{S}_{Y^*}$ and $x_n^*\in X^*$ such that
$$ \Vert \zeta_n\Vert\to 0;\;\; \frac{\langle y_n^*,\zeta_n\rangle}{\Vert \zeta_n\Vert}\to 1\quad\mbox{as}\; n\to\infty, $$
$$ x_n^*\in D^*F(\bar x+t_nu_n,\bar y+t_nv_n+t_n\zeta_n)(y_n^*).$$ Then, there is a sequence of nonnegative  reals $\{\delta_n\}$ with $\delta_n\in]0,t_n[$ such that
\begin{equation} \label{Subdiff}
\begin{array}{ll}
\langle x_n^*,x-\bar x-t_nu_n\rangle&- \langle y_n^*,y-\bar y-t_nv_n-t_n\zeta_n\rangle\le \varepsilon_n\Vert( (x,y)-(\bar x+t_nu_n,\bar y+t_nv_n+t_n\zeta_n)\Vert\\
&\mbox{for all}\;\; (x,y)\in\gph F\;\;\mbox{with}\; \Vert (x,y)-(\bar x+t_nu_n,\bar y+t_nv_n+t_n\zeta_n)\Vert<\delta_n.
\end{array}
\end{equation}
By setting $y_n=\bar y+t_nv_n+(t_n-\delta_n)\zeta_n,$ then $(\bar x+t_nu_n,y_n)\underset{(u,v)}{\rightarrow}(\bar x,\bar y).$ Hence, by the directional metric regularity of $F$ at $(\bar x,\bar y)$ in  the direction $(u,v)$ with some constant $\tau>0,$ for  $n$  large enough, there is $x_n\in F^{-1}(y_n)$ such that
$$\begin{array}{ll} \Vert x_n-\bar x-t_nu_n\Vert&\le \tau(1+t_n)d(y_n, F(\bar x+t_nu_n))\\
&\le \tau(1+t_n)\Vert y_n- \bar y-t_nv_n-t_n\zeta_n\Vert=\tau(1+t_n)\delta_n\Vert \zeta_n\Vert.
\end{array}$$
Thus, $\Vert (x_n,y_n)-(\bar x+t_nu_n,\bar y+t_nv_n+t_n\zeta_n)\Vert<\delta_n$  for $n$ sufficiently large, and therefore, taking into account of (\ref{Subdiff}), one derives that
$$ \Vert x_n^*\Vert \tau(1+t_n)\delta_n\Vert \zeta_n\Vert\ge\delta_n \langle y_n^*,\zeta_n\rangle,$$
which implies $ \liminf_{n\to\infty} \Vert x_n^*\Vert\ge\tau^{-1},$ and the proof is complete.\hfill{$\Box$}\end{proof}

\section{Applications to the Directional Differentiability of the Optimal Value Function}}
 \vskip 0.5cm
 Let $X,Y$ be  Banach spaces,  $f:X\times Y\to\R$ and $g:X\to Y$ be continuous functions. We suppose that   $K$ is  a closed convex subset of $Y$. In this section, we consider  a parameterized optimization problem of the form
 $$(\mathcal{P}_y)\quad\quad\quad \min_{x\in X}f(x,y)\quad \mbox{s.t.}\quad y\in g(x)-K,$$ depending on
 a  parameter $y\in Y.$ The  \textit{feasible set}  of ($\mathcal{P}_y$) is denoted by
 $$\varPhi(y):=\{x\in X:\quad y\in g(x)-K\}.$$
 For $y=0,$ the corresponding problem ($\mathcal{P}_0$) denoted by ($\mathcal{P}$) is viewed as an unperturbed problem. We set $f(x):=f(x,0)$,  $\varPhi_0:=\varPhi(0)$ and we denote  by $v(y)$ the \textit{optimal value function } of ($\mathcal{P}_y$) and by $\mathcal{S}(y)$ the associated set of optimal solutions:
 $$v(y):=\inf_{x\in\varPhi(y)}f(x,y);$$
 $$\mathcal{S}(y):=\mbox{argmin}_{x\in\varPhi(y)}f(x,y).$$
 Recall that $x_\varepsilon$ is said to be an \textit{$\varepsilon-$optimal solution} of ($\mathcal{P}_y$) if $x_\varepsilon\in\varPhi(y)$ and $f(x_\varepsilon,y)\le v(y)+\varepsilon.$
 In this section, we apply the concept of  directional metric regularity to discuss the directional differentiability of the optimal value function $v(y).$
We now assume that $f(\cdot,\cdot)$ and $g$ are mappings of  class $\mathcal{C}^1.$ Associated to a given direction $d\in Y,$ we consider a path $y(t)$ of the form $y(t)=td+o(t)$, with $t\in\R_+.$
 Let us recall the notion of \textit{  feasible direction}   \cite  {RefBonS}:
 \begin{definition}
 Let $x_0\in \varPhi(0)$ be given. A direction $h\in X$ is said to be a  feasible direction at $x_0$, relative to the direction $d\in Y$, iff for any path $y(t)=td+o(t)$ with $t\ge 0$ in $Y$,  there exists $r(t)=o(t)$ in $X$ such that $x_0+th+r(t)\in\varPhi(y(t))$.
 \end{definition}
 Note  from \cite  {RefBonS} that  if $h$ is a feasible direction relative to $d\in Y$ at $x_0,$ then
 \begin{equation}\label{FD}
 Dg(x_0)h-d\in T_K(g(x_0)),
 \end{equation}
 where, $T_K(g(x_0))=\{d\in Y\,:\, d(g(x_0)  +td, K)=o(t), \quad t\geq 0\}$ stands for the \textit{contingent cone}  to the convex set $K$ at $g(x_0).$
 \vskip 0.2cm
 Conversely, one has the following:
 \begin{lemma}\label{Feasible}
 Let $x_0\in \varPhi(0)$ be given. If relation {\rm(\ref{FD})} holds, and if  in addition, $G(x):=g(x)-K$ is directionally metrically regular in the direction $(h,d)$ at $(x_0,0),$ then $h$ is a feasible direction relative to $d$.
  \end{lemma}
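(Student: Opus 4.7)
The plan is to use the directional metric regularity of $G$ at $(x_0,0)$ in the direction $(h,d)$ to pull an arbitrary path $y(t)=td+o(t)$ back into the graph of $G$ near $x_0+th$. Concretely, I will show $d(x_0+th,G^{-1}(y(t)))=o(t)$; choosing any $z(t)\in G^{-1}(y(t))$ with $\Vert z(t)-x_0-th\Vert\le d(x_0+th,G^{-1}(y(t)))+t^2$ and setting $r(t):=z(t)-x_0-th$ then yields $r(t)=o(t)$ and $x_0+th+r(t)\in\varPhi(y(t))$, which is precisely the required feasibility.

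The first step is to bound the residual $d(y(t),G(x_0+th))$. Since $y\in G(x)\Leftrightarrow g(x)-y\in K$, one has $d(y(t),G(x_0+th))=d(g(x_0+th)-y(t),K)$. Using $g\in\mathcal{C}^1$ and $y(t)=td+o(t)$, I expand
$$g(x_0+th)-y(t)=g(x_0)+t(Dg(x_0)h-d)+o(t).$$
The tangency hypothesis (\ref{FD}) says $d(g(x_0)+t(Dg(x_0)h-d),K)=o(t)$, so the triangle inequality delivers $d(y(t),G(x_0+th))=o(t)$.

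Next I must verify, for all sufficiently small $t>0$, the three qualifications in Definition~\ref{Def-Direc} with $\gamma=1$, $(\bar x,\bar y)=(x_0,0)$ and direction $(h,d)$. Boundedness $(x_0+th,y(t))\in B((x_0,0),\delta)$ is immediate. For the conic condition I write $(x_0+th,y(t))-(x_0,0)=t(h,d+o(t)/t)$ and observe that $(h,d+o(t)/t)\in B((h,d),\varepsilon)$ once $t$ is small, so the difference sits in $\cone B((h,d),\varepsilon)$. The gauge condition $d(y(t),G(x_0+th))\le\eta\Vert(x_0+th,y(t))-(x_0,0)\Vert$ holds because its left-hand side is $o(t)$ while its right-hand side is of order $t$. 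Directional metric regularity then gives
$$d(x_0+th,G^{-1}(y(t)))\le\tau\,d(y(t),G(x_0+th))=o(t),$$
which closes the construction. The only substantive point is the bookkeeping of $o(t)$ terms needed to establish the conic and gauge qualifications, which is routine once the first-order expansion of $g(x_0+th)$ is in hand.
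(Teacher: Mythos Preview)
Your proof is correct and follows essentially the same route as the paper's: expand $g(x_0+th)-y(t)$ to first order, use the tangency condition (\ref{FD}) to get $d(y(t),G(x_0+th))=o(t)$, verify the ball/cone/gauge qualifications for directional metric regularity, and conclude $d(x_0+th,G^{-1}(y(t)))=o(t)$. Your explicit selection of $z(t)$ within $t^2$ of the infimum is in fact slightly more careful than the paper's write-up, which just asserts the existence of $\bar x(t)=x_0+th+o(t)\in\varPhi(y(t))$.
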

  \vskip 0.2cm
 \begin{proof}  By the assumption, there exist  $\tau,\varepsilon>0$ such that
\begin{equation}\label{aaa}
  \begin{array}{ll}&d(x,\varPhi(y))\le\tau d(y,g(x)-K)\\
  &\forall (x,y)\in B((x_0,0),\varepsilon)\cap[(x_0,0)+\cone B((h,d),\varepsilon)],\;d(y,g(x)-K)\le \varepsilon\Vert (x,y)-(x_0,0)\Vert.
  \end{array}
\end{equation}
 Let $y(t)=td+o(t)$ be given and set $x(t)=x_0+th,$ one has
  $$g(x_0)+tDg(x_0)h-td+o(t)\in K,\;\mbox{as}\;t\downarrow 0.$$
  As $g(x(t))=g(x_0)+tDg(x_0)h+o(t),$ then we have
  $g(x(t))-td+o(t)\in K$ from which we obtain
$$d(y(t),g(x(t))-K)=o(t).$$ Hence, when $t>0$ is sufficiently small,
  $$(x(t),y(t))\in B((x_0,0),\varepsilon)\cap[(x_0,0)+\cone B((h,d),\varepsilon)],\;d(y(t),g(x(t))-K)\le \varepsilon\Vert (x(t),y(t))-(x_0,0)\Vert.$$
  Thus, there is $\bar x(t):=x_0+th+o(t)\in\varPhi(y(t)),$ as $t>0.$\hfill{$\Box$}\end{proof}
  \vskip 0.2cm
  \begin{lemma}\label{Ro-Linearazation} Assume that $Y$ is finite dimensional. Let $x_0\in\varPhi(0)$ and $d\in Y\setminus\{0\},$ $h\in X$ such that (\ref{FD}) holds.
  If $G$ is directionally metrically regular at $(x_0,0)$ in the direction $(h,d),$ then one has
  \begin{equation}\label{RL}
  d\in\inte \{Dg(x_0)X-T_K(g(x_0))\}.
  \end{equation}
  \end{lemma}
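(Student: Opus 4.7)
The plan is to prove that every $d'\in Y$ sufficiently close to $d$ belongs to $Dg(x_0)X-T_K(g(x_0))$, which by definition means that $d$ is an interior point of this set. Fix such a $d'$ and, for small $t>0$, consider the test pair $x(t):=x_0+th$, $y(t):=td'$.

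The first step is to bound the residual $d(y(t),g(x(t))-K)$. Writing $w:=Dg(x_0)h-d$, assumption (\ref{FD}) gives $w\in T_K(g(x_0))$; by convexity of $K$, the function $t\mapsto d(g(x_0)+tw,K)/t$ is non-decreasing on $(0,\infty)$ and has $\liminf_{t\downarrow 0}=0$, hence it tends to $0$. Thus one can choose $k(t)\in K$ with $\|k(t)-g(x_0)-tw\|=o(t)$; combined with $g(x_0+th)=g(x_0)+tDg(x_0)h+o(t)$, this yields
\[ d(y(t),g(x(t))-K)\le \|y(t)-g(x(t))+k(t)\|=\|t(d'-d)+o(t)\|\le t\|d'-d\|+o(t). \]

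Next I would invoke the directional metric regularity of $G=g(\cdot)-K$ at $(x_0,0)$ in the direction $(h,d)$, using constants $\tau,\varepsilon>0$ as in (\ref{aaa}). For $\|d'-d\|$ small, the difference $(x(t),y(t))-(x_0,0)=t(h,d')$ lies in $\cone B((h,d),\varepsilon)$, and for $t$ small the pair sits in $B((x_0,0),\varepsilon)$; the gauge condition $d(y(t),g(x(t))-K)\le\varepsilon\|(x(t),y(t))-(x_0,0)\|=\varepsilon t(\|h\|+\|d'\|)$ is secured by the previous estimate. Hence (\ref{aaa}) produces $x'(t)\in\varPhi(td')$ with $x'(t)=x_0+th+t\rho(t)$, where $\|\rho(t)\|$ is bounded by a constant times $\|d'-d\|$ plus $o(1)$, in particular $Dg(x_0)\rho(t)$ is bounded in $Y$. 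The inclusion $g(x'(t))-td'\in K$ expands to
\[ g(x_0)+t\bigl(Dg(x_0)h+Dg(x_0)\rho(t)-d'+o(1)\bigr)\in K. \]
Now finite-dimensionality of $Y$ enters: extract a subsequence $t_n\downarrow 0$ along which $Dg(x_0)\rho(t_n)\to\eta$; since $Dg(x_0)X$ is a finite-dimensional, hence closed, subspace of $Y$, one can write $\eta=Dg(x_0)u$ for some $u\in X$. Passing to the limit and using closedness of the contingent cone of the convex set $K$ gives $Dg(x_0)h+Dg(x_0)u-d'\in T_K(g(x_0))$, so $d'=Dg(x_0)(h+u)-w$ with $w\in T_K(g(x_0))$, as desired.

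The main obstacle is not conceptual but bookkeeping. One must verify the three conditions in the definition of directional metric regularity simultaneously along the explicit path $(x_0+th,td')$ with perturbed target direction $d'$, and handle the fact that $X$ may be infinite-dimensional, so only the image $Dg(x_0)\rho(t)$—not $\rho(t)$ itself—benefits from compactness. The finite-dimensionality of $Y$ is used precisely at this point and again to ensure that $Dg(x_0)X$ is closed, so that $\eta$ admits a preimage $u\in X$.
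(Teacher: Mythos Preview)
Your argument is correct and follows the same route as the paper: perturb $d$ to a nearby $d'$, test the pair $(x_0+th,\,td')$, bound the residual via (\ref{FD}), invoke the directional regularity inequality (\ref{aaa}) to produce $x'(t)\in\varPhi(td')$ close to $x_0+th$, and then pass to the limit along a subsequence using finite-dimensionality of $Y$ to land in $Dg(x_0)X-T_K(g(x_0))$. The paper writes $h(t)=(x(t)-x_0)/t$ where you write $h+\rho(t)$, and it is slightly less explicit than you are about the closedness of $Dg(x_0)X$, but the two proofs are substantively identical.
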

  \vskip 0.2cm
 \begin{proof}
  Let $\tau>0,\varepsilon\in(0,1)$ be such that (\ref{aaa}) happens.
Let  $0<\delta<\varepsilon\Vert  d\Vert/2$ and fix  $\tilde{d}\in B(d,\delta)$.
  Since  (\ref{FD}) holds, one has $$g(x_0) + tDg(x_0)h-td+o(t)\in K, \,\text{ as} \,t> 0.$$ Hence,
  $$g(x_0+th)-td+o(t)\in K,\; \,\text{ as}t>0.$$ Moreover, one has $\delta\le \frac{\varepsilon\Vert\tilde{d}\Vert}{2(1-\delta/2)}<\varepsilon \Vert\tilde{d}\Vert $  since $\delta< \varepsilon d\Vert/2<\frac{\varepsilon(\delta+\Vert\tilde{d}\Vert)}{2}.$
  Consequently,  when $t$ is sufficiently small,
 $$d(t\tilde{d},g(x_0+th)-K)\le t\delta+o(t)<\varepsilon t(\Vert h\Vert+\Vert \tilde{d}\Vert).$$
 According to  (\ref{aaa}), we now select  $x(t)\in\varPhi(t\tilde{d})$ such that
 $$\Vert x_0+th-x(t)\Vert\le\tau t\delta+o(t).$$
 Setting $h(t)=\frac{x(t)-x_0}{t},$ one has
 $$\Vert h-h(t)\Vert\le \tau\delta+\frac{o(t)}{t}.$$
 As $x(t)\in\varPhi(t\tilde{d}),$ then
$$ t\tilde{d}\in g(x_0+h(t))-K,$$
and therefore,
$$\tilde{d}\in Dg(x_0)(h(t))+\frac{o(t)}{t}-\frac{K-g(x_0)}{t}.$$
Since $Y$ is finite dimensional, we can take  a sequence $(t_n)_{n\in\N} \downarrow 0$ such that the sequence $(Dg(x_0)h(t_n))_{n\in\N}$ converges to some $w\in Dg(x_0)X.$ Then, thanks to  the preceding relation we obtain
$$\tilde{d} \in Dg(x_0)X-T_K(g(x_0)),$$
which completes  the proof of the lemma.\hfill{$\Box$}\end{proof}
\vskip 0.5cm
Denote by $L(x,\lambda,y)$ and $\Lambda(x_0)$ the \textit{Lagrangian} of ($\mathcal{P}_y$) and  the set of \textit{Lagrange multipliers} of the problem ($\mathcal{P}_0$) for $x_0\in S(0)$, respectively. More precisely,  if $N_K(g(x_0))$ stands for the \textit{normal cone}  to the convex set $K$ at $g(x_0)$, we have:
$$L(x,\lambda,y)=f(x,y)+\langle \lambda,g(x)-y\rangle,\;\; (x,\lambda)\in X\times Y^*;$$
$$\Lambda(x_0)=\{\lambda\in N_K(g(x_0)):\;\; D_xL(x_0,\lambda,0)=0\}.$$
 For a given $d\in Y,$ we now consider the following  \textit{linearization} of ($\mathcal{P}_y$):
$$(\mathcal{P}\mathcal{L}_d)\quad \min_{h\in X}Df(x_0,0)(h,d)\;\;\mbox{s.t.}\;\; Dg(x_0)h-d\in T_K(g(x_0)).$$
From \cite  [p.~ 278]{RefBonS}, we observe that relation (\ref{RL}) is exactly Robinson's constraint qualification for the problem ($\mathcal{P}\mathcal{L}_d$), and the dual of ($\mathcal{P}\mathcal{L}_d$) is
$$(\mathcal{D}\mathcal{L}_d)\quad \max_{\lambda\in\Lambda(x_0)}D_yL(x_0,\lambda,0)d.$$
According to the standard duality  result in extended linear programming (see, e.g., \cite  [Theorem  2.165]{RefBonS}), from Lemma \ref{Ro-Linearazation}, one has the following dual result for the linearization problem ($\mathcal{D}\mathcal{L}_d$):
\begin{lemma}\label{DLi} Let $x_0\in S(0)$ and $d\in Dg(x_0)X-T_K(g(x_0))$  be given. Assume that $Y$ is finite dimensional and  $G=g-K$ is directionally metrically regular at $(x_0,0)$ in  the direction $(h,d)$ for some $h\in X$ with
$Dg(x_0)h-d\in T_K(g(x_0)).$ Then there is no duality gap between problems ($\mathcal{P}\mathcal{L}_d$) and ($\mathcal{D}\mathcal{L}L_d$). Moreover, the common optimal value is finite,  if and only if,  the set $\Lambda(x_0)$ is nonempty; and in this case, the set of optimal solutions of ($\mathcal{D}\mathcal{L}_d$) is a nonempty compact set.
\end{lemma}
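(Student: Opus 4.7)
The plan is to reduce everything to the extended linear programming duality theorem (Theorem~2.165 of \cite{RefBonS}) applied to the linearized problem $(\mathcal{P}\mathcal{L}_d)$. The key ingredient that unlocks this reduction is already at hand: Lemma~\ref{Ro-Linearazation} translates the directional metric regularity hypothesis on $G=g-K$ into Robinson's constraint qualification for $(\mathcal{P}\mathcal{L}_d)$.

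First I would verify Robinson's constraint qualification. Since $x_0\in\mathcal{S}(0)\subseteq\varPhi(0)$, the hypotheses of Lemma~\ref{Ro-Linearazation} are met by the direction $(h,d)$ (note that $Dg(x_0)h-d\in T_K(g(x_0))$ is exactly the condition \eqref{FD}, and the regularity in direction $(h,d)$ is assumed). Hence
\[
d\in\inte\bigl\{Dg(x_0)X-T_K(g(x_0))\bigr\},
\]
which is precisely Robinson's constraint qualification for $(\mathcal{P}\mathcal{L}_d)$, as recorded in \cite[p.~278]{RefBonS}.

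Next I would invoke the duality theorem for extended linear programming. The primal $(\mathcal{P}\mathcal{L}_d)$ is a convex program with linear objective $h\mapsto Df(x_0,0)(h,d)$ and the single conic-type constraint $Dg(x_0)h-d\in T_K(g(x_0))$, whose dual is $(\mathcal{D}\mathcal{L}_d)=\max_{\lambda\in\Lambda(x_0)}D_yL(x_0,\lambda,0)d$; this dual identification uses the standard formulas $N_K(g(x_0))=[T_K(g(x_0))]^{\circ}$ and the definition of $\Lambda(x_0)$. Since $Y$ is finite dimensional and Robinson's qualification holds at the primal, Theorem~2.165 of \cite{RefBonS} gives: (a) no duality gap between $(\mathcal{P}\mathcal{L}_d)$ and $(\mathcal{D}\mathcal{L}_d)$; (b) finiteness of the common value is equivalent to the dual feasibility set being nonempty, i.e.\ $\Lambda(x_0)\neq\emptyset$; and (c) in the finite case the dual solution set is nonempty and compact.

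The step I expect to be the main obstacle is the verification that the dual of $(\mathcal{P}\mathcal{L}_d)$ is indeed $(\mathcal{D}\mathcal{L}_d)$ as written, including the stationarity constraint $D_xL(x_0,\lambda,0)=0$ inside $\Lambda(x_0)$: one has to Lagrangianize the linear constraint $Dg(x_0)h-d\in T_K(g(x_0))$, take the support functional $\sigma_{T_K(g(x_0))}(-\lambda)=\delta_{N_K(g(x_0))}(\lambda)$, and then eliminate the primal variable $h$ (whose unconstrained minimization forces $Df(x_0,0)(\cdot,0)+\langle\lambda,Dg(x_0)\cdot\rangle=0$, i.e.\ $D_xL(x_0,\lambda,0)=0$). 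Once that algebra is in place, everything else is a direct citation of \cite[Theorem~2.165]{RefBonS}, completing the proof.
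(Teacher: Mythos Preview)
Your proposal is correct and follows exactly the route the paper takes: invoke Lemma~\ref{Ro-Linearazation} to obtain Robinson's constraint qualification (\ref{RL}) for $(\mathcal{P}\mathcal{L}_d)$, identify $(\mathcal{D}\mathcal{L}_d)$ as the dual via \cite[p.~278]{RefBonS}, and then cite the extended linear programming duality theorem \cite[Theorem~2.165]{RefBonS}. In fact the paper does not supply a separate proof of this lemma at all --- it is stated as an immediate consequence of the preceding discussion --- so your write-up, including the explicit dual derivation, is if anything more detailed than the original.
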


\vskip 0.5cm
The following theorem offers  a result related to the Hadamard directional differentiability of the optimal value function $v(y)$.  In order to establish this result, we use the concept of  directional metric regularity  which is weaker than the
 Robinson constraint qualification  in general used in the literature (see, e.g., \cite  [Theorem 4.26]{RefBonS}).
\begin{theorem} Let $Y$ be finite dimensional and let $d\in Y$ with
$$\{d,-d\}\subseteq Dg(x)X-T_K(g(x))\;\;\mbox{for all}\; x\in S(x_0).$$ Assume that
\begin{itemize}
\item[(i)] the multifunction $G=g-K$ is directionally metrically  regular in the  directions $(0,d)$ and $(0,-d)$ at all $(x,0)$ with $x\in S(0);$

\item[(ii)] for any sequence $y_n=t_nd+o(t_n)$ with $t_n\downarrow 0,$ there exists a sequence of $o(t_n)-$optimal solutions $(x_n)$ of ($\mathcal{P}_{y_n}$), converging to some $x_0\in S(0).$
\end{itemize}
Then   denoting  by $v^{'}_{-}(0,d)$ and  $v^{'}_{+}(0,d)$, the lower and upper Hadamard directional derivatives of $v$ at $0$ in the direction $d$,  one has
\begin{equation}\label{HD}
\begin{array}{ll}
&v^{'}_{-}(0,d)\ge\displaystyle\inf_{x\in S(0)\displaystyle}\inf_{\lambda\in\Lambda(x)}D_yL(x,\lambda,0)d;\\
& v_+^{'}(0,d)\le \displaystyle\inf_{x\in S(0)}\displaystyle\sup_{\lambda\in\Lambda(x)}D_yL(x,\lambda,0)d.
\end{array}
\end{equation}
As a result, if $\Lambda(x)$ is  a singleton $\{\lambda(x)\}$ for all $x\in S(0),$ then the Hadamard directional derivative in the  direction $d$ of $v(y)$ at $0$ exists and
$$v^{'}(0,d)=\inf_{x\in S(0)
}D_yL(x,\lambda(x),0)d.$$
\end{theorem}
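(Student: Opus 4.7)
The plan is to derive matching bounds on $v'_+(0,d)$ and $v'_-(0,d)$ via Lagrangian duality, exploiting Lemmas~\ref{Ro-Linearazation}, \ref{DLi} and \ref{Feasible}. For every $x_0\in S(0)$, the hypothesis $\{d,-d\}\subseteq Dg(x_0)X-T_K(g(x_0))$ combined with the directional metric regularity of $G$ gives Robinson's constraint qualification for $(\mathcal{P}\mathcal{L}_d)$, so by Lemma~\ref{DLi} the common optimal value of $(\mathcal{P}\mathcal{L}_d)$ and $(\mathcal{D}\mathcal{L}_d)$ equals $\sup_{\lambda\in\Lambda(x_0)}D_yL(x_0,\lambda,0)d$ and $\Lambda(x_0)$ is nonempty.

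\emph{Upper estimate.} Fix $x_0\in S(0)$ and any $h$ feasible for $(\mathcal{P}\mathcal{L}_d)$, i.e.\ $Dg(x_0)h-d\in T_K(g(x_0))$. Lemma~\ref{Feasible} produces, along every path $y(t)=td+o(t)$, a feasible trajectory $x(t)=x_0+th+o(t)\in\Phi(y(t))$. A first-order expansion of $f$ at $(x_0,0)$ then gives
\[
v(y(t))\le f(x(t),y(t))=v(0)+t\,Df(x_0,0)(h,d)+o(t),
\]
so $v^{'}_{+}(0,d)\le Df(x_0,0)(h,d)$. Infimizing over feasible $h$ and invoking Lemma~\ref{DLi} yields $v^{'}_{+}(0,d)\le\sup_{\lambda\in\Lambda(x_0)}D_yL(x_0,\lambda,0)d$; taking the infimum over $x_0\in S(0)$ finishes this half.

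\emph{Lower estimate.} Given any $y_n=t_nd+o(t_n)$, hypothesis~(ii) yields an $o(t_n)$-optimal sequence $x_n\to x_0\in S(0)$. For any $\lambda\in\Lambda(x_0)$, the inclusions $g(x_n)-y_n\in K$ and $\lambda\in N_K(g(x_0))$ force $\langle\lambda,g(x_n)-y_n-g(x_0)\rangle\le 0$, so
\[
f(x_n,y_n)=L(x_n,\lambda,y_n)-\langle\lambda,g(x_n)-y_n\rangle\ge L(x_n,\lambda,y_n)-\langle\lambda,g(x_0)\rangle.
\]
A Taylor expansion of $L$ at $(x_0,\lambda,0)$ together with the stationarity $D_xL(x_0,\lambda,0)=0$ rewrites the right-hand side as $v(0)+t_n\,D_yL(x_0,\lambda,0)d+o(t_n)+o(\|x_n-x_0\|)$. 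Combining with $v(y_n)\ge f(x_n,y_n)-o(t_n)$, dividing by $t_n$ and passing to $\liminf$ delivers $v^{'}_{-}(0,d)\ge D_yL(x_0,\lambda,0)d$. The infima over $\lambda\in\Lambda(x_0)$ and over $x_0\in S(0)$ yield the claimed bound, and when $\Lambda(x)=\{\lambda(x)\}$ the two estimates collapse into the equality for $v^{'}(0,d)$.

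\emph{Main obstacle.} The delicate step is absorbing $o(\|x_n-x_0\|)$ into $o(t_n)$, which requires a rate $\|x_n-x_0\|=O(t_n)$. This is precisely where the directional metric regularity of $G$ in the directions $(0,\pm d)$ at $(x_0,0)$ enters: Lemma~\ref{Tron} applied at the base point $(x_0,y_n)$, which lies in the directional cone since $y_n/t_n\to d$, produces a feasible $\widetilde{x}_n\in\Phi(y_n)$ with $\|\widetilde{x}_n-x_0\|=O(t_n)$; then a symmetric application in direction $(0,-d)$ together with the $o(t_n)$-optimality of $x_n$ and the Lipschitz continuity of $f$ promotes this to the required bound on $x_n$ itself.
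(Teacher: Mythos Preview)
Your upper estimate is essentially the paper's, and it is fine.

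The lower estimate, however, has a genuine gap, and it is precisely the one you flag as the ``main obstacle''. Your proposed resolution does not work: from the existence of some $\widetilde{x}_n\in\Phi(y_n)$ with $\|\widetilde{x}_n-x_0\|=O(t_n)$, the $o(t_n)$-optimality of $x_n$, and Lipschitz continuity of $f$, you can conclude only that $f(x_n,y_n)\le f(\widetilde{x}_n,y_n)+o(t_n)=v(0)+O(t_n)$. This is a bound on \emph{values}, not on $\|x_n-x_0\|$; without a growth condition on the problem (which is not assumed) there is no way to convert it into a rate for $x_n$. A trivial example: take $X=Y=\R$, $g\equiv 0$, $K=\{0\}$, $f(x,y)=x^4$. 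Then $x_n=t_n^{1/5}$ is $o(t_n)$-optimal for every $y_n$, yet $\|x_n-0\|/t_n\to\infty$, and the remainder $o(\|x_n-x_0\|)/t_n$ in your Lagrangian expansion need not vanish. The reference to Lemma~\ref{Tron} and a ``symmetric application in direction $(0,-d)$'' does not help, because directional metric regularity controls distances to $\Phi(y)$ in terms of constraint violation, not distances between near-optimal points and $S(0)$.

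The paper avoids this obstacle by not expanding the Lagrangian at $x_0$ at all. Instead it uses a \emph{primal} linearization in the direction $-d$: pick $h$ with $Dg(x_0)h+d\in T_K(g(x_0))$, and, using convexity of $K$ together with directional metric regularity, build $z_n\in\Phi(0)$ with $z_n=x_n+t_nh+o(t_n)$. Then
\[
v(y_n)-v(0)\ge f(x_n,y_n)-f(z_n,0)-o(t_n)= -t_n\,Df(x_0,0)(h,-d)+o(t_n),
\]
the Taylor expansion now being between $x_n$ and a point at distance $O(t_n)$ from $x_n$ (not from $x_0$), so no rate on $\|x_n-x_0\|$ is needed. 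Taking the infimum over feasible $h$ for $(\mathcal{P}\mathcal{L}_{-d})$ and invoking the duality Lemma~\ref{DLi} for $-d$ yields the lower bound. If you want to salvage your Lagrangian route, you would need an additional hypothesis (e.g.\ a H\"older growth of $f(\cdot,0)$ over $\Phi_0$ near $S(0)$) forcing $o(\|x_n-x_0\|)=o(t_n)$; under the stated assumptions, the primal comparison with $z_n$ is the step that makes the argument go through.
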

\vskip 0.2cm
\begin{proof} Let $x\in S(0)$ and let $h\in X$ be  such that $Dg(x)h-d\in T_K(g(x)).$ As $G$ is directionally metrically regular at $(x,0)$ in the direction $(0,d),$ then it  is  also directionally metrically regular at $(x,0)$ in the  direction $(h,d).$ By Lemma \ref{Feasible}, $h$ is a feasible direction relative to $d$, i.e., , $x+th+o(t)\in \varPhi(td)$, $\downarrow  0.$ Therefore,
$$v(td)\le f(x+th+o(t),td)=f(x,0)+tDf(x,0)(h,d)+o(t),$$ and
consequently,
$$\limsup_{t\downarrow  0}\frac{v(td)-v(0)}{t}\le Df(x,0)(h,d).$$
Since $x$ is arbitrary in $S(0)$ and $h$ is an arbitrary feasible point of $(\mathcal{P}\mathcal{L}_d),$ the second inequality in (\ref{HD}) is proved.
\vskip 0.1cm
For the first inequality, let $t_n\downarrow 0;$ $y_n=t_nd+o(t_n)$ and $(x_n)$ be a sequence of $o(t_n)-$solutions of ($\mathcal{P}_{y_n}$) as in (ii), which converges to $x_0\in S(0).$  Pick $h\in X$ such that
$$Dg(x_0)h+d\in T_K(g(x_0));$$
equivalently,
$$g(x_0)+tDg(x_0)h+td+o(t)\in K,\;t\downarrow  0.$$
Since $g(x_n)-y_n\in K,$ and as $K$ is convex, for any $t>0,$ when $n$ is sufficiently large and  such that  $t_n/t<1$, one has
$$\begin{array}{ll}&(1-t_n/t)[g(x_n)-y_n]+t_n/t[g(x_0)+tDg(x_0)h+td+o(t)]\\
&=g(x_n)-y_n+t_n/t[g(x_0)-g(x_n)]+t_nDg(x_0)h +t_nd +t_no(t)/t\in K.
\end{array}$$
Therefore, for $\varepsilon>0,$ when $n$ is sufficiently large, one has $d(g(x_n)+t_nDg(x_0)h,K)\le t_n\varepsilon.
$ Since
$$g(x_n+t_nh)=g(x_n)+t_nDg(x_0)h+o(t_n),$$
one obtains
$$d(g(x_n+t_nh),K)\le t_n\varepsilon+o(t_n).$$
By the directional metric regularity of $G$, we get some  $z_n\in \varPhi_0=G^{-1}(0)$ such that
$x_n+t_nh-z_n=o(t_n\varepsilon).$
Thus,
$$\begin{array}{ll}\displaystyle \frac{v(y_n)-v(0)}{t_n}&\ge\displaystyle  \frac{f(x_n,y_n)-f(z_n,0)+o(t_n)}{t_n}\\
&=\displaystyle\frac{f(x_n,y_n)-f(x_n+t_nh,0)-o(t_n\varepsilon)}{t_n}\\&=-Df(x_0,0)(h,-d)-o(t_n\varepsilon)/t_n.
\end{array}
$$
Finally, by Lemma \ref{DLi}, there is no duality gap between ($\mathcal{P}\mathcal{L}_{-d})$) and ($\mathcal{D}\mathcal{L}_{-d}$); as $\varepsilon>0$ is arbitrary, one derives that
$$\liminf_{n\to\infty}\frac{v(y_n)-v(0)}{t_n}\ge \inf_{\lambda\in\Lambda(x_0)}D_yL(x_0,\lambda,0)d,$$
 from which follows the first inequality of (\ref{HD}).\hfill{$\Box$}\end{proof}

\begin{acknowledgement}
Many thanks to the referees for their helpful comments and suggestions which have led to an improved paper.
\end{acknowledgement}
\section{Conclusions}
In this contribution we have tried to demonstrate how  H\"{o}lder directional metric regularity   of set-valued mappings  is  an  useful concept for studying the stability and the sensitivity analysis of parameterized optimization problems. This has been achieved in the last section,  where  we have investigated  the Hadamard directional differentiability of the optimal value function of a  general parametrized optimization problem.

\section{Conflict of Interest}
The author declares that he has no conflict of interest.

\bibliographystyle{spmpsci_unsrt.bst}
\bibliography{ref_HMR-JOTA}

\end{document}